\pdfoutput=1
\documentclass[11pt]{article}

\usepackage[letterpaper,margin=1in]{geometry}
\usepackage[T1]{fontenc}
\usepackage[utf8]{inputenc}
\usepackage{lmodern}
\usepackage{amsmath,amssymb,amsfonts,amsthm}
\usepackage{graphicx}
\usepackage{subcaption}
\usepackage{booktabs}
\usepackage{enumitem}
\usepackage{microtype}
\usepackage{xcolor}
\usepackage[hidelinks]{hyperref}

\newtheorem{theorem}{Theorem}
\newtheorem{assumption}{Assumption}
\newtheorem{remark}{Remark}

\title{Residual-Certified Adaptive Tracking of Solution Manifolds in Parametric Dynamical Systems}
\author{Yiran Xing\textsuperscript{1} \quad Yuandi Xu\textsuperscript{1} \quad Sulei Hu\textsuperscript{1,*}\\
\small \textsuperscript{1}University of Science and Technology of China, Hefei 230026, China\\
\small \textsuperscript{*}Corresponding author: Sulei Hu\\
\small Specially Appointed Professor, University of Science and Technology of China\\
\small E-mail: husulei@ustc.edu.cn}
\date{\today}

\begin{document}
\maketitle

\begin{abstract}
This paper presents a residual-certified adaptive method for tracking local solution manifolds in parametric dynamical systems. The method combines local POD reduction, full physical residual checks, state-distance snapshot forgetting, high-fidelity resampling, and a lightweight physics-informed neural correction. Instead of learning one global parameter-to-state map, the algorithm maintains the currently active local branch and updates it when the residual indicates loss of validity. The analysis explains why residual thresholds are meaningful on regular branches through local residual-error control, and why stricter local updates are needed near folds or other degenerate neighborhoods. Numerical studies on Ostwald ripening, a particle population-balance model, and the Bratu equation test the approach across low-dimensional dynamics, nonlinear nonlocal residual compensation, and near-fold model failure. The results show that residual-certified local model management can concentrate high-fidelity computation in difficult parameter regions while preserving an interpretable link between surrogate prediction, physical consistency, and active-branch tracking.
\end{abstract}

\noindent\textbf{Keywords:} parametric dynamical systems; solution-manifold tracking; residual certification; model reduction; proper orthogonal decomposition; physics-informed neural networks; snapshot forgetting; Bratu equation

\section{Introduction}

Numerical modeling of complex physical processes is commonly expressed through differential equations, integro-differential equations, or constrained boundary-value problems. The mathematical object of interest is nevertheless broader than a single solve at a fixed parameter value. It is the parametric dynamical system generated by state variables, control parameters, boundary conditions, physical constraints, and evolution operators. For continuum evolution, phase-transition dynamics, multiscale transport, multiphysics coupling, and population-balance dynamics, partial differential equations and integro-differential equations describe local conservation laws, diffusion mechanisms, nonlocal interactions, or equilibrium constraints. The associated computational task is to characterize state trajectories in phase space, stable and unstable manifolds, branch responses under parameter perturbations, folds of solution branches near critical parameters, and local topological reorganizations induced by operator degeneracy. Accordingly, the present work formulates the computational problem as local adaptive tracking of a parametric solution manifold: given control parameters, initial and boundary conditions, and physical constraints, how can one stably, accurately, and interpretably approximate the local state structures generated by parameter variation under finite computational resources?

Under this unified formulation, the Ostwald ripening equation, the particle population-balance model, and the Bratu equation are not merely independent test cases with different physical backgrounds. They form a progressive sequence from regular to complex, from low-dimensional to strongly nonlinear, and from regular to nearly singular. The Ostwald ripening equation represents a low-dimensional or weakly nonlinear evolutionary system, where the main difficulty is to extract a few dominant modes while maintaining physical branch compatibility; its mean-field coarsening origin goes back to Lifshitz--Slyozov--Wagner theory \cite{lifshitz1961kinetics,wagner1961alterung}. The particle population-balance model introduces nonlocal integral coupling and strongly nonlinear feedback, reflecting the sensitive dependence of state evolution on global distributions, conserved moments, and nonnegativity constraints \cite{ramkrishna2000population,marchisio2005solution}. The Bratu equation exhibits a typical parameter bifurcation and a fold of the solution branch; the degeneracy of the Jacobian near the critical point directly challenges conventional reduced-order models, Newton-type iterations, and single-valued neural-operator approximations \cite{joseph1973quasilinear,keller1977numerical}. The experimental system in this paper is therefore a systematic test of the hybrid algorithm across three levels of complexity: low-dimensional nearly linear dynamics, strongly nonlinear nonlocal dynamics, and bifurcating equilibrium solution manifolds.

High-fidelity numerical solvers can provide reliable benchmark solutions at prescribed parameter values. In high-dimensional parameter spaces, large-scale inverse problems, complex branch tracking, and real-time online prediction, however, they face severe computational cost and dimensionality barriers. Reduced-order modeling, especially POD and reduced-basis methods, extracts dominant energy modes from snapshot spaces and can efficiently compress the degrees of freedom when the state manifold is approximately flat and weakly sensitive to parameters \cite{sirovich1987turbulence,berkooz1993proper,rozza2008reduced,benner2015survey}. Such linear projection methods rely on a crucial assumption: the parametric state set can be approximated by a relatively stable low-dimensional linear subspace. When the dynamical system exhibits moving fronts, local high-frequency oscillations, strong nonlocal coupling, coexistence of multiple branches, or bifurcation-critical behavior, the geometry of the state manifold may bend or fold rapidly. A precomputed global basis can then become mismatched with the active branch. This mismatch not only increases truncation error but may also drive the approximate state away from the physically relevant branch and generate nonphysical intermediate states or extrapolated states.

Physics-informed neural networks provide another route for mesh-free modeling of complex dynamical systems. They embed differential operators, boundary conditions, initial conditions, and observational data into a loss functional, and use automatic differentiation and optimization to approximate a function satisfying the physical constraints \cite{raissi2019physics}. Compared with linear reduction, neural networks possess stronger nonlinear expressivity and can in principle compensate high-frequency residuals and local nonlinear structures that are difficult for linear subspaces to capture. Yet, in strongly nonlinear, stiff, or bifurcation-critical regimes, pure neural approaches also encounter structural difficulties. Neural-network training typically has a spectral bias toward low-frequency smooth components and may approximate discontinuities, spikes, moving interfaces, and high-frequency oscillations inefficiently \cite{rahaman2019spectral}. Loss landscapes composed of multiple physical constraints may suffer from gradient competition, scale imbalance, and ill-conditioned optimization, leading to slow convergence, local collapse, or global divergence near unstable manifolds or Jacobian-degenerate points \cite{wang2021understanding}. Moreover, when multiple solution branches correspond to the same parameter, directly learning a single-valued map
\[
\mu\mapsto u(\mu)
\]
is conceptually insufficient, because the true object is a branched parametric solution manifold.

Recent developments in local reduced-order modeling, nonlinear manifold learning, neural operators, residual-adaptive sampling, and bifurcation-neighborhood error estimation have addressed different aspects of these challenges. Local reduced-order models construct multiple local low-dimensional spaces through time windows, parameter partitions, or offline clustering, thereby alleviating the inability of a global basis to cover a curved manifold. Residual-adaptive physics-informed neural networks increase collocation density in high-residual regions, improving the resolution of training samples in physically difficult domains \cite{wu2023comprehensive}. Neural-operator methods attempt to learn the solution operator of a family of parametric equations to achieve fast inference \cite{li2021fourier,kovachki2023neural}. Bifurcation and analytic-error-bound theory reveal that the residual--error relation near folds or higher-order degeneracies may degenerate from linear equivalence to fractional-order control \cite{keller1977numerical,bolte2007clarke}. These methods, however, usually address reduction, neural training, residual sampling, or local singularity detection separately. A unified framework combining local representation, residual certification, snapshot management, and nonlinear compensation remains needed.

The effective computation of complex parametric dynamical systems should therefore not rely on a single model to statically reconstruct a global state manifold. It should instead adopt a local, adaptive, residual-certified tracking paradigm. In regular low-dimensional or weakly nonlinear regions, local POD extracts low-frequency dominant modes at low cost. When the system approaches a strongly nonlinear region, a rapidly curved local manifold, or a bifurcation-critical region, the physical residual senses the deviation between the current reduced space and the true active branch, triggering local snapshot update, basis reconstruction, or high-fidelity resampling. For high-frequency residuals that cannot be expressed by a linear reduced model, a lightweight physics-informed neural network performs residual-space compensation. The algorithm thus focuses on continuously identifying, tracking, and correcting the local solution manifold along evolutionary or equilibrium branches of parametric dynamical systems.

In this framework, the physical residual links dynamical-systems theory to algorithmic adaptivity. For a residual operator
\[
\mathcal R(u;\mu)=0,
\]
the residual norm measures not only the satisfaction of the original physical equations by a candidate state, but also the deviation of the current reduced model from the true active branch. In a locally regular region, a two-sided control between the residual and the state error makes the residual an adaptive triggering signal requiring no additional supervised labels. Near isolated degenerate points associated with fold bifurcations or finite-branch pitchfork bifurcations, the two-sided linear equivalence may fail. The adaptive trigger, however, only requires the state error to be controlled by the residual from one side. When this control degenerates into a fractional-order estimate, stricter residual thresholds and state-distance constraints can still preserve local discriminative power. If the residual remains below the admissible threshold, the algorithm continues with the current local reduced model. If it exceeds the threshold, the algorithm infers that the current state may have left the effective local neighborhood and switches to a new local approximation space through high-fidelity resampling, state-distance forgetting, and basis update.

The proposed residual-certified adaptive hybrid algorithm uses the active local branch as the basic computational unit. It combines the low-dimensional compression capability of POD, the a posteriori certification capability of physical residuals, the snapshot-forgetting mechanism driven by state-space distance, and nonlinear residual compensation through a lightweight neural network. In the online stage, the algorithm no longer uses a fixed global basis generated offline; instead, it dynamically maintains a local snapshot set according to the parameter path and residual feedback. In the inference stage, it first reconstructs the dominant state by a local POD basis and then corrects high-frequency or strongly nonlinear residual components through the neural network. In cross-region evolution, residual thresholds, state distance, and linearized condition numbers jointly determine whether the local model should be updated, thereby reducing online inference cost while maintaining physical compatibility.

The main contributions are as follows.

\begin{enumerate}[label=(\arabic*)]
\item A residual-certified hybrid computational paradigm is proposed for tracking parametric solution manifolds. The computational object is formulated as a parametric solution manifold and its local structural variation under control parameters. Based on this formulation, the method combines local POD, residual triggering, state-distance forgetting, and neural residual compensation. The framework gives a unified interpretation of error sources and model-update mechanisms in low-dimensional nearly linear systems, strongly nonlinear nonlocal systems, and bifurcating equilibrium systems.

\item A residual--error equivalence is established on regular active branches and used as a theoretical basis for algorithmic feasibility. Under local regularity and operator-stability assumptions, and with finite locally distinguishable branches in multibranch neighborhoods, the residual norm controls the distance from a candidate state to the current active branch. This provides a mathematical basis for unsupervised adaptive triggering, local snapshot update, and high-fidelity resampling. The residual is therefore not merely a training loss for neural networks, but an a posteriori certification quantity for local model management and solution-manifold validity.

\item A state-distance snapshot maintenance mechanism and a residual-space neural compensation strategy are developed for multibranch systems. In multiple-solution or branch-switching problems, state-space distance helps preserve the local coverage of the current active branch and reduces the risk that inactive-branch snapshots enter the current POD subspace. For strongly nonlinear and nonlocal problems, the lightweight neural network learns only the residual component after low-dimensional projection,
\[
u_{\mathrm{hyb}}=u_{\mathrm{low}}+g_\theta,
\]
rather than a full solution operator. This compresses the network hypothesis space and mitigates spectral bias and gradient competition.

\item A progressive experimental system consisting of OR, PMC, and Bratu equations is designed to assess the algorithm under different dynamical-system complexities. The OR equation evaluates local manifold compression in low-dimensional nearly linear dynamics; the PMC model tests residual compensation under strongly nonlinear nonlocal feedback; the Bratu equation examines residual discrimination and snapshot update near fold bifurcations, branch degeneracy, and Jacobian near-singularity.
\end{enumerate}

In summary, this paper studies residual-certified adaptive tracking of solution manifolds in parametric dynamical systems. This formulation places stability, branches, bifurcations, manifold evolution, residual control, and model reduction within a common analytical framework. It also makes all algorithmic modules serve a unified objective: local approximation, error certification, and adaptive update of the current active branch. The following sections present the hybrid algorithm, residual--error control and complexity estimates, and numerical validation on OR, PMC, and Bratu systems.

\section{Adaptive Hybrid Algorithm for Parametric Solution-Manifold Tracking}

\subsection{Dynamical-system formulation and computational object}

This paper understands complex physical models as parametric dynamical systems and as local tracking problems for their equilibrium or constrained solution manifolds. The framework contains two related mathematical objects.

The first is a parametric semiflow induced by an evolution equation. Let the state space be \(\mathcal X\) and the parameter space be \(\mathcal P\subset\mathbb R^p\). The system satisfies
\[
\dot u(t)=\mathcal N(u(t);\mu),\qquad u(0)=u_0,\qquad \mu\in\mathcal P .
\]
Under well-posedness conditions it generates a parametric semiflow
\[
\Phi_\mu^t:u_0\mapsto u(t;\mu),
\]
whose objects of study include state trajectories in phase space, stability, local manifold structures, attracting regions, and parameter-perturbation responses. For material evolution, population dynamics, and multiscale transport, this semiflow describes the trajectory structure of physical states under time and control parameters.

The second object is a parametric solution manifold defined by steady equations, boundary-value problems, integral constraint equations, or equilibrium conditions:
\[
\mathcal R(u;\mu)=0,\qquad
\mathcal R:\mathcal X\times\mathcal P\rightarrow\mathcal Y,
\]
where \(\mathcal Y\) is the Banach space of residuals. For evolutionary problems, a dynamical residual may also be defined in a trajectory space
\[
\mathcal X_T\subset L^2(0,T;V)\cap H^1(0,T;V^\ast)
\]
by
\[
\mathcal R_T(u;\mu)=\dot u-\mathcal N(u;\mu).
\]
Thus an entire trajectory can be viewed as the solution of a residual system. To unify notation, the following discussion writes the residual system simply as
\[
\mathcal R(u;\mu)=0,
\]
whether it represents an evolutionary trajectory, a steady solution, a boundary-value solution, or a constrained state.

The parametric solution manifold is
\[
\mathcal M=\{(\mu,u)\in\mathcal P\times\mathcal X:\mathcal R(u;\mu)=0\}.
\]
For a fixed parameter \(\mu\), the corresponding solution fiber is
\[
\mathcal M(\mu)=\{u\in\mathcal X:\mathcal R(u;\mu)=0\}.
\]
The algorithm does not attempt to construct a global approximation of \(\mathcal M\) over the full parameter domain. Instead, along a parameter path it adaptively tracks the local state manifold of the currently active branch. The current active branch, denoted \(\mathcal M_\alpha\subset\mathcal M\), is the local solution set actually visited by the algorithm under the given parameter path and physical constraints. If several solutions exist for the same parameter, the algorithm does not compress all branches into one linear space. It maintains a locally consistent representation of the current physical branch through residual certification, state distance, and local resampling.

The three models used later have a clear progressive relation. The OR equation reflects low-dimensional nearly linear or weakly nonlinear evolution; the PMC model reflects nonlocal integral coupling and strongly nonlinear feedback; the Bratu equation reflects fold bifurcation and Jacobian degeneracy near a critical point. They therefore form a validation chain from low-dimensional nearly linear dynamics to strongly nonlinear nonlocal dynamics and then to bifurcating equilibrium systems.

Two remarks clarify the scope of this formulation. First, the term ``solution manifold'' is used in a local computational sense. It does not require the full set \(\mathcal M\) to be globally smooth, connected, or single-valued over \(\mathcal P\). The algorithm only requires that the portion of \(\mathcal M\) visited by the parameter path can be locally represented, certified, and updated. This distinction is important for systems with folds or multiple branches: a global graph representation \(u=u(\mu)\) may fail, while local fibers and active-branch neighborhoods remain meaningful for computation. Second, the residual system is not introduced merely as a convenient notation. It provides a common quantity that is available for differential, integral, constrained, steady, and trajectory-level formulations. Once a candidate state is generated by a reduced model or a neural correction, the same residual norm can be evaluated in the original physical space and can therefore serve as an a posteriori consistency test across all examples considered here.

\subsection{Spatial setting for local low-dimensional approximation}

POD and Galerkin projection rely on inner products, orthogonal projections, and singular-value decompositions. In addition to Banach-space residual analysis, the paper introduces a Hilbert-space structure. Assume that the state space \(\mathcal X\) is continuously embedded in a Hilbert space \(H\),
\[
\mathcal X\hookrightarrow H.
\]
POD modes, orthogonal projections, and energy truncation are defined in \(H\) or in a finite-dimensional discrete subspace \(H_h\). The residual operator regularity is still analyzed as
\[
\mathcal R:\mathcal X\times\mathcal P\to\mathcal Y .
\]
This distinction is important: POD measures approximation quality in an energy norm, whereas physical residuals may live in weaker or stronger spaces determined by the differential or integral operator.

Let the current local snapshot set be
\[
\mathcal S_k=\{u_{k-m+1},\ldots,u_k\}\subset H_h.
\]
After centering if needed, the snapshot matrix \(U_k\) admits a singular-value decomposition
\[
U_k=W\Sigma V^\top .
\]
Taking the first \(r\) left singular vectors gives the local POD basis
\[
\Phi_k=[\phi_1,\ldots,\phi_r].
\]
The reduced approximation of a candidate state is
\[
u_{\mathrm{low}}(\mu)=\bar u_k+\Phi_k a(\mu),
\]
where \(\bar u_k\) is the local mean and \(a(\mu)\in\mathbb R^r\) is the reduced coordinate vector.

The POD truncation error on the snapshot set satisfies the standard optimality property of POD/SVD snapshot approximation \cite{sirovich1987turbulence,berkooz1993proper}:
\[
\inf_{\dim V_r=r}\left(\sum_{u_i\in\mathcal S_k}\|u_i-P_{V_r}u_i\|_H^2\right)^{1/2}
=\left(\sum_{j>r}\sigma_j^2\right)^{1/2}.
\]
This identity controls only the snapshot set itself. To extend it to a neighborhood of the current parameter, one needs local smoothness of the active branch. If the branch map \(\mu\mapsto u_\alpha^\ast(\mu)\) is locally Lipschitz or \(C^1\) in a parameter neighborhood \(\mathcal P_k\), then for \(\rho_k=\operatorname{diam}(\mathcal P_k)\) one obtains a bound of the form
\[
\operatorname{dist}_H(u_\alpha^\ast(\mu),\operatorname{span}\Phi_k)
\leq C_{\mathrm{POD}}\varepsilon_{\mathrm{POD}}+C_{\mathrm{loc}}\rho_k .
\]
Here \(\varepsilon_{\mathrm{POD}}\) is the snapshot truncation error, and \(C_{\mathrm{loc}}\rho_k\) describes local extrapolation due to parameter-neighborhood radius and manifold curvature. When the parameter path leaves the current neighborhood or the local curvature grows rapidly, POD alone becomes unreliable and residual triggering is required.

In practical computations the local rank \(r\) can be selected by either an energy criterion or a residual-aware criterion. The energy criterion chooses the smallest \(r\) such that
\[
\frac{\sum_{j=1}^r\sigma_j^2}{\sum_{j=1}^{m}\sigma_j^2}\geq 1-\varepsilon_{\mathrm{eng}},
\]
where \(\varepsilon_{\mathrm{eng}}\) is a prescribed truncation tolerance. This rule is effective when the singular-value decay is sharp and the discarded modes have limited physical influence. In strongly nonlinear or nonlocal systems, however, low-energy modes may still produce large residuals because derivatives, integral couplings, or boundary constraints can amplify small state-space errors. For this reason, the algorithm may also enforce a residual-aware check: after constructing \(\Phi_k\), it evaluates the full residual of reconstructed snapshots or validation states and increases \(r\) if the residual is inconsistent with the target threshold. This prevents the local basis from being selected solely by an energy norm that is not aligned with the physical operator.

The local mean \(\bar u_k\) also has a nontrivial role. If the active branch has a slowly drifting baseline, centering the snapshots improves the conditioning of the POD coordinates and prevents the leading mode from merely representing the mean state. For steady or boundary-value problems, centering must be applied consistently with boundary conditions; otherwise the reconstruction may violate homogeneous constraints. In the experiments below, the mean and basis are updated together whenever the snapshot pool is refreshed, so that the affine reduced space remains tied to the current active branch.

\subsection{Low-dimensional dynamical projection and candidate states}

Given a POD basis \(\Phi_k\), a candidate low-dimensional state can be generated by Galerkin, Petrov--Galerkin, or least-residual projection. In Galerkin form one solves
\[
\Phi_k^\top \mathcal R(\bar u_k+\Phi_k a;\mu)=0 .
\]
For non-self-adjoint, stiff, or nonlinearly coupled systems, a least-residual form is often more robust:
\[
a(\mu)=\arg\min_{a\in\mathbb R^r}
\|\mathcal R(\bar u_k+\Phi_k a;\mu)\|_{\mathcal Y_h}^2 .
\]
The resulting state
\[
u_{\mathrm{low}}(\mu)=\bar u_k+\Phi_k a(\mu)
\]
is not immediately accepted as a physical solution. It is treated as a candidate state requiring residual certification.

This distinction is essential. In low-dimensional systems such as OR, the local POD basis may already capture most physically relevant structures, and the residual remains small over several parameter steps. For PMC and Bratu-type problems, the state manifold may show strong curvature, nonlocal feedback, local high-frequency structure, or branch folds. The reduced projection then only provides a candidate state whose credibility must be checked through the full-order residual. The reduced solution is therefore not the final output, but a certifiable candidate generated by the current local model.

The reduced solve can be implemented in several ways depending on the stiffness and algebraic structure of the residual. A Galerkin projection is computationally cheapest when the operator is symmetric or weakly nonsymmetric, but it can become unstable if the reduced test space fails to represent the residual directions that dominate the full model. A Petrov--Galerkin or least-residual formulation is more expensive but aligns the reduced solve with the certification quantity used later. This paper adopts the latter interpretation in the analysis: the candidate should reduce the physical residual within the current low-dimensional affine space, but acceptance is always determined by the full residual evaluated after reconstruction.

For nonlinear residuals, the reduced optimization problem may have multiple local minima. The algorithm therefore uses continuation in parameter space: the reduced coordinate from the previous accepted state initializes the current reduced solve. This is not a global continuation method; it is a numerical warm start that maintains local branch consistency. If the residual after convergence remains too large, the failure is interpreted as evidence that the current reduced space is no longer adequate, not merely as an optimizer failure.

\subsection{Residual triggering compatible with stability estimates}

Let the residual indicator be
\[
\eta(u;\mu)=\|\mathcal R(u;\mu)\|_{\mathcal Y}.
\]
When \(u=u_{\mathrm{low}}(\mu)\), it measures the degree to which the candidate satisfies the original physical equation. The key point is that, in a locally regular region, this residual can also control the distance from the candidate to the true active branch.

Assume that \(D_u\mathcal R(u^\ast_\alpha(\mu);\mu)\) is invertible near the active branch. Then the local inverse theorem gives constants \(c_1,c_2>0\) such that
\[
c_1\|u-u_\alpha^\ast(\mu)\|_{\mathcal X}
\leq
\|\mathcal R(u;\mu)\|_{\mathcal Y}
\leq
c_2\|u-u_\alpha^\ast(\mu)\|_{\mathcal X}.
\]
In multibranch form this can be written as
\[
\operatorname{dist}_{\mathcal X}(u,\mathcal M_\alpha(\mu))
\leq C_r\|\mathcal R(u;\mu)\|_{\mathcal Y}.
\]
Thus the residual trigger is not an empirical threshold alone; it is an a posteriori certification mechanism supported by local stability. The paper uses the residual as a local model-management signal to decide whether the basis should be updated and whether the high-fidelity solver should be called.

The basic rule is
\[
\eta_k=\|\mathcal R(u_{\mathrm{low}}(\mu_k);\mu_k)\|_{\mathcal Y}.
\]
If
\[
\eta_k\leq \tau,
\]
then the current low-dimensional state is accepted and may be further corrected by the neural residual compensator. If
\[
\eta_k>\tau,
\]
the current local basis is considered unreliable; the algorithm calls the high-fidelity solver at \(\mu_k\), obtains a new snapshot, updates the snapshot pool, and reconstructs the POD basis.

The threshold \(\tau\) can be chosen in either absolute or normalized form. An absolute threshold is appropriate when the residual norm has a fixed physical scale. In many parametric problems, however, the residual magnitude varies with the parameter or with the norm of the forcing term. A normalized indicator
\[
\eta_k^{\mathrm{rel}}
=
\frac{\|\mathcal R(u_{\mathrm{low}};\mu_k)\|_{\mathcal Y}}
{1+\|\mathcal F(\mu_k)\|_{\mathcal Y}}
\]
is then more robust, where \(\mathcal F(\mu_k)\) denotes the parameter-dependent forcing or source contribution. The theoretical statements remain unchanged after replacing the residual norm by an equivalent scaled norm on a compact parameter set. In the numerical discussion, the residual threshold should be understood as the threshold applied to a norm compatible with the stability estimate.

It is also useful to distinguish two types of residual growth. Smooth residual growth over several steps indicates ordinary local extrapolation: the active branch is still being followed, but the local affine space is slowly losing accuracy. Abrupt residual growth, especially when accompanied by a sharp increase in the linearized condition number, indicates a possible local structural change, such as a fold neighborhood or a transition to a more nonlinear regime. The first case can often be corrected by a single high-fidelity update, while the second requires a smaller parameter step and stricter state-distance filtering in the snapshot pool.

\subsection{State-distance forgetting and active-branch maintenance}

Residual triggering determines when the local model may have failed. State-distance forgetting determines which snapshots should be retained. If one simply accumulates all historical high-fidelity states, the local snapshot pool may contain old states or snapshots from inactive branches. This can make the POD subspace represent an average of several incompatible structures and create nonphysical intermediate states.

Let the current snapshot pool be \(\mathcal S_k\). A distance-based forgetting rule retains snapshots satisfying
\[
\|u_i-u_{\mathrm{ref}}\|_H\leq R_k ,
\]
where \(u_{\mathrm{ref}}\) is the current accepted state or the latest high-fidelity state, and \(R_k\) is an adaptive radius. Equivalently, one may retain the \(m\) nearest snapshots in \(H\). The updated set is
\[
\mathcal S_{k+1}=\{u_i\in\mathcal S_k\cup\{u_{\mathrm{new}}\}:
\|u_i-u_{\mathrm{ref}}\|_H\leq R_k\}.
\]
This rule is meaningful for multibranch systems because different branches may be close in parameter space but well separated in state space.

Suppose the active branch \(\mathcal M_\alpha\) and another branch \(\mathcal M_\beta\) have a local separation distance
\[
\operatorname{dist}_H(\mathcal M_\alpha,\mathcal M_\beta)
=\inf_{u_\alpha\in\mathcal M_\alpha,\ u_\beta\in\mathcal M_\beta}
\|u_\alpha-u_\beta\|_H
\geq \delta_b,
\]
and the snapshot-pool radius satisfies
\[
\max_{u_i,u_j\in\mathcal S_k}\|u_i-u_j\|_H<\frac{\delta_b}{2}.
\]
Then state-distance maintenance reduces the risk that inactive-branch snapshots remain in the current local basis. The result does not imply that all multibranch systems can be separated by distance alone, but it provides an interpretable basis for local snapshot purification on regular branches.

This is especially important in the Bratu equation. Near a fold, the parameter distance may be small while the state direction changes significantly. Updating snapshots only by parameter distance is therefore insufficient. State distance, residual variation, and local tangent information should be used together to maintain branch consistency.

The forgetting radius \(R_k\) can be fixed, adaptive, or percentile based. A fixed radius is simple but may be inappropriate when the local state scale changes with the parameter. An adaptive radius can be tied to the empirical spread of the current accepted snapshots:
\[
R_k=\kappa
\left(
\frac{1}{|\mathcal S_k|}
\sum_{u_i\in\mathcal S_k}\|u_i-u_{\mathrm{ref}}\|_H^2
\right)^{1/2},
\]
where \(\kappa>0\) controls locality. A nearest-neighbor rule retains a fixed number of snapshots and is often more stable when the sampling density is irregular. In all cases, the key requirement is that the retained set represent a connected local portion of the active branch rather than a mixture of remote historical states. The residual trigger and the forgetting rule therefore play complementary roles: the residual decides when new physical information is needed, while the state distance decides which historical information remains relevant.

The snapshot update can be interpreted geometrically. POD builds the best linear approximation to the current snapshot cloud. If the cloud straddles two branches, its principal directions may point between physically valid states. State-distance forgetting shrinks the cloud around the active branch so that the POD modes approximate local tangent and curvature directions of that branch. This is why the method is particularly useful in multibranch problems even without an explicit global branch-continuation module.

\subsection{One-sided residual control and local update near bifurcation}
\label{subsec:bif-control}

The ordinary residual trigger is linearly justified on regular branches. Near Bratu-type fold bifurcations, however,
\[
D_u\mathcal R(u;\mu)
\]
may lose invertibility, and the residual norm is no longer linearly equivalent to the state error. This paper does not treat complete global branch tracking as the main algorithmic target. Instead, the bifurcation neighborhood is treated as a validity boundary of residual certification. If the active branch is locally separated in state space and the singular point is isolated or removable, the residual can still control the error through a one-sided H\"older-type estimate.

Specifically, let \(\mathcal M_\alpha\) be the current active branch. If, in a neighborhood excluding or isolating the singular point, there exist \(C>0\) and \(0<\alpha\leq 1\) such that
\[
\operatorname{dist}_{\mathcal X}(u,\mathcal M_\alpha(\mu))
\leq C\|\mathcal R(u;\mu)\|_{\mathcal Y}^{\alpha},
\]
then the residual threshold still translates into an error tolerance. Given \(\varepsilon_{\mathrm{tol}}\), it suffices to choose
\[
\tau\leq \left(\frac{\varepsilon_{\mathrm{tol}}}{C}\right)^{1/\alpha}
\]
to guarantee
\[
\operatorname{dist}_{\mathcal X}(u,\mathcal M_\alpha(\mu))
\leq \varepsilon_{\mathrm{tol}}.
\]
The fold normal form
\[
F(x,\lambda)=x^2-\lambda
\]
gives a typical square-root control near the critical point. The finite-branch pitchfork normal form
\[
F(x,\lambda)=x^3-\lambda x
\]
shows that multibranch structure makes the error bound depend on branch separation and local order. Thus residual certification near bifurcation is not completely invalid; it must be interpreted as fractional one-sided control rather than linear equivalence.

Accordingly, the algorithm does not introduce an additional branch-tracking system for Bratu-type problems. It monitors residuals, state distances, and linearized condition numbers in the critical neighborhood. When the residual exceeds the threshold or branch separation is insufficient to support the current local basis, the high-fidelity solver reanchors the active branch, and state-distance forgetting removes snapshots that may belong to other branches. The Bratu equation therefore characterizes the validity boundary of ordinary residual certification and the local update capacity of the basis.

This treatment deliberately separates two tasks that are sometimes conflated. The first task is complete continuation of all branches through a singular point. That task requires additional parametrization information and is not the main objective of the present algorithm. The second task is online certification of whether the current local surrogate remains reliable along the branch being visited. For the second task, a one-sided residual bound and branch-separation condition are sufficient. If the residual is small, the candidate is close to some solution on the active local fiber; if the residual is large, the algorithm does not need to identify the entire branch geometry before acting. It only needs to update the local basis by calling the high-fidelity solver and filtering snapshots. This is the reason the Bratu example can be used to test residual certification near a fold without introducing an explicit pseudo-arclength continuation component.

\subsection{Spectral decoupling and nonlinear residual compensation}

Local POD captures the low-frequency backbone of the state. In PMC-type problems, however, nonlocal strong coupling, boundary-layer effects, high-frequency perturbations, or local spikes may not be expressible by a linear basis. The hybrid approximation is therefore written as
\[
u_{\mathrm{hyb}}=u_{\mathrm{low}}+g_\theta,
\]
where \(g_\theta\) is a lightweight physics-informed neural network representing the residual compensation \cite{raissi2019physics}.

The training objective is
\[
\min_\theta
\left[
\|\mathcal R(u_{\mathrm{low}}+g_\theta;\mu)\|_{\mathcal Y}^2
+\lambda_b\mathcal L_{\mathrm{bc}}
+\lambda_i\mathcal L_{\mathrm{ic}}
+\lambda_r\|g_\theta\|_H^2
\right].
\]
Here \(\mathcal L_{\mathrm{bc}}\) and \(\mathcal L_{\mathrm{ic}}\) denote boundary- and initial-condition errors, and \(\lambda_b,\lambda_i,\lambda_r\) are weights. The regularization term \(\lambda_r\|g_\theta\|_H^2\) suppresses nonphysical oscillations produced by the compensator.

The key design is that the neural network does not approximate the full field. It learns only the local residual after POD truncation. If the exact state is decomposed as
\[
u^\ast=u_{\mathrm{low}}^\ast+u_{\mathrm{res}}^\ast,
\]
then the network seeks
\[
g_\theta\approx u_{\mathrm{res}}^\ast .
\]
This greatly reduces the hypothesis space and is intended to mitigate spectral bias and ill-conditioned multi-term optimization, two documented difficulties of neural PDE solvers \cite{rahaman2019spectral,wang2021understanding}. For OR, \(g_\theta\) usually provides only a small correction. For PMC, it is important for compensating nonlocal strong-coupling error. For Bratu, it mainly improves local accuracy and cannot replace residual certification and high-fidelity resampling for branch discrimination.

The regularization on \(g_\theta\) is not merely a numerical convenience. Since the low-dimensional component already carries the dominant energy, an unconstrained neural correction could overfit residual collocation points and introduce oscillatory artifacts that reduce the training loss but degrade the state norm or violate physical shape constraints. The penalty \(\|g_\theta\|_H^2\), together with boundary and initial losses, biases the correction toward the smallest physically admissible modification of the POD state. In distribution-valued problems such as OR and PMC, additional weak penalties on mass conservation or nonnegativity can be included without changing the framework. The compensator should be understood as a local corrector, not as a replacement for the high-fidelity solver.

\subsection{Closed-loop parameter scanning and algorithmic workflow}

Let the outer task be driven by an objective functional
\[
J(\mu)=\mathcal G(u(\mu),\mu),
\]
which may represent observation misfit, performance index, critical-state criterion, or inverse-problem loss. An outer scanner provides a sequence
\[
\{\mu_k\}_{k=0}^K,
\]
and the inner hybrid algorithm returns state approximations, residual indicators, and local-model credibility.

The workflow is as follows.
\begin{enumerate}[label=\arabic*.]
\item Call the high-fidelity solver in an initial parameter neighborhood to generate the initial snapshot set \(\mathcal S_0\).
\item Construct the local POD basis \(\Phi_0\) in \(H_h\).
\item At the current parameter \(\mu_k\), solve for the candidate low-dimensional state \(u_{\mathrm{low}}(\mu_k)\) by Galerkin, Petrov--Galerkin, or least-residual projection.
\item Compute the residual indicator
\[
\eta_k=\|\mathcal R(u_{\mathrm{low}};\mu_k)\|_{\mathcal Y}.
\]
\item If \(\eta_k\leq\tau\), accept the low-dimensional state and train or update the compensator \(g_\theta\) to obtain \(u_{\mathrm{hyb}}=u_{\mathrm{low}}+g_\theta\).
\item If \(\eta_k>\tau\), call the high-fidelity solver to generate \(u_{\mathrm{new}}\), perform state-distance forgetting, and update the snapshot set and POD basis.
\item If the condition number of \(D_u\mathcal R\) deteriorates, the state distance grows rapidly, or the system approaches a Bratu-type fold neighborhood, reduce the parameter step, tighten the residual criterion, and preferentially reanchor the active branch by high-fidelity computation.
\item Feed \(u_{\mathrm{hyb}}\), residual indicators, and local-branch information back to the outer functional \(J(\mu)\), which selects the next parameter point.
\end{enumerate}

Figure~\ref{fig:workflow} summarizes this closed-loop workflow.

\begin{figure}[htbp]
\centering
\includegraphics[width=0.96\textwidth]{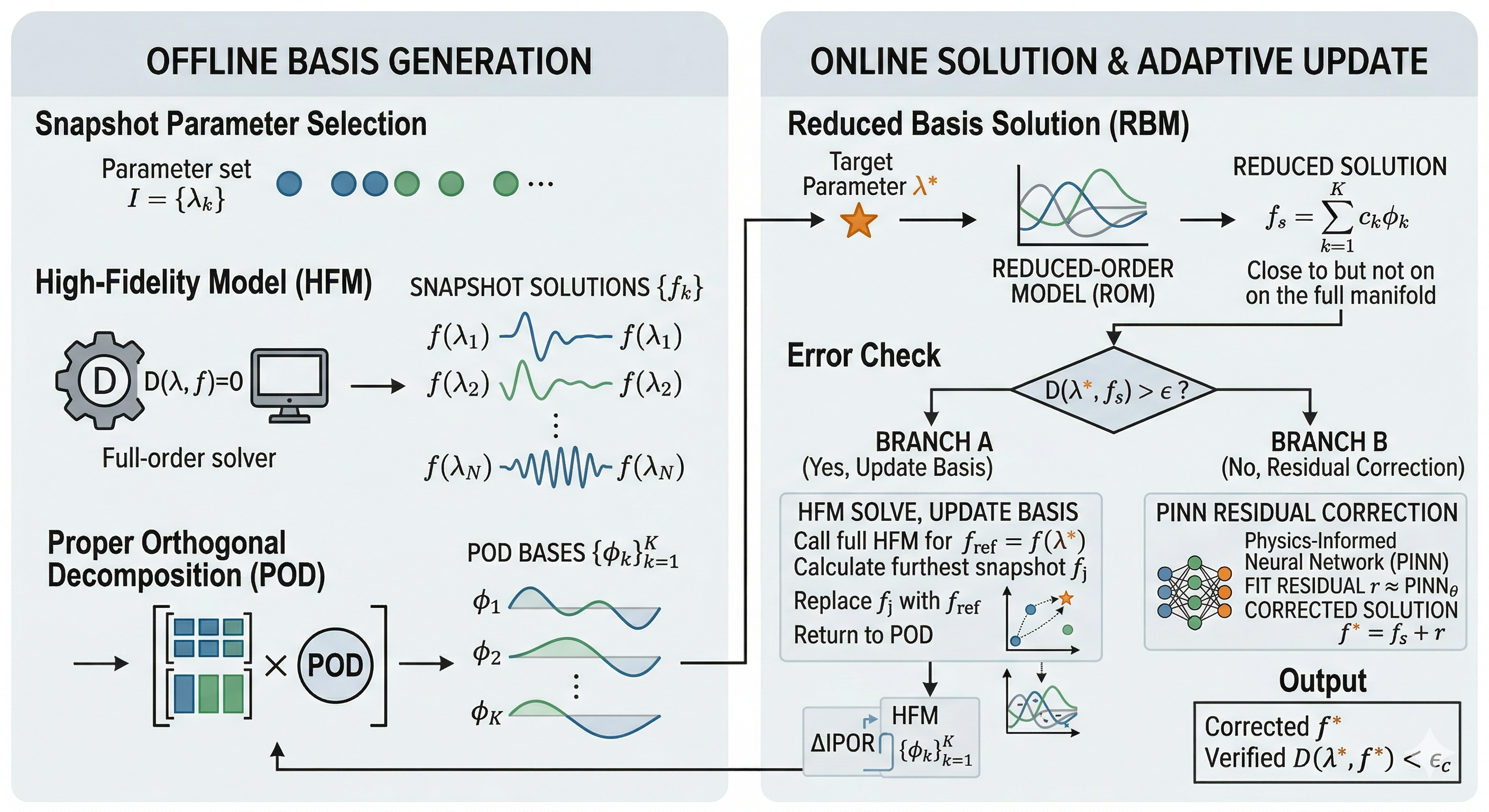}
\caption{Workflow of the residual-certified adaptive hybrid algorithm. The parameter scan generates candidate states by a local POD model; the physical residual certifies local validity; high-fidelity resampling and state-distance forgetting update the active snapshot basis; and a lightweight neural compensator corrects nonlinear residual components.}
\label{fig:workflow}
\end{figure}

The process is a closed loop. Parameter advancement changes the system state; state variation shifts the local manifold; residuals detect manifold deviation; high-fidelity resampling corrects the local basis; neural compensation reduces linear-projection error; and stricter residual criteria plus snapshot updates preserve local consistency near bifurcation. Computational resources are concentrated in regions where nonlinear effects or local structural changes actually occur.

For clarity, the operational logic can be summarized as a three-level decision structure. The first level is prediction: the local reduced model predicts a candidate state using the current POD basis. The second level is certification: the full residual and, when available, the linearized condition number evaluate whether the candidate lies within the certified neighborhood of the active branch. The third level is repair: if certification fails, high-fidelity resampling and state-distance forgetting reconstruct the local approximation space. This structure ensures that the neural compensator is only used after the low-dimensional state has passed a residual-based credibility check, and that high-fidelity computation is reserved for points where the existing local model has demonstrably lost validity.

\subsection{Summary of the algorithmic section}

This section presented the adaptive hybrid architecture for tracking parametric solution manifolds. It distinguished evolutionary semiflows from equilibrium solution manifolds; clarified that POD is defined in Hilbert or discrete inner-product spaces while residual regularity is analyzed in Banach spaces; constructed the low-frequency backbone by local POD; used compatible residual norms to judge deviation of the reduced model from the true branch; maintained the active-branch snapshot set by state-distance forgetting; and compensated high-frequency nonlinear residuals with a lightweight physics-informed neural network.

For OR-type nearly linear systems, the algorithm mainly performs local low-dimensional manifold compression. For PMC-type strongly nonlinear systems, spectral decoupling and residual compensation become more important. For Bratu-type bifurcating systems, residual triggering must be interpreted together with state distance, condition-number monitoring, and high-fidelity resampling. This structure underlies the residual--error control, complexity estimates, and global error decomposition developed next.

\section{Feasibility, Error Control, and Complexity Analysis}

\subsection{Parametric solution manifolds and regular-branch assumptions}

Let \(\mathcal X\) be the state space, \(\mathcal Y\) the residual space, and \(\mathcal P\subset\mathbb R^p\) a compact parameter domain. The residual operator is
\[
\mathcal R:\mathcal X\times\mathcal P\rightarrow\mathcal Y,\qquad
\mathcal R(u;\mu)=0.
\]
The solution manifold and solution fiber are
\[
\mathcal M=\{(\mu,u)\in\mathcal P\times\mathcal X:\mathcal R(u;\mu)=0\},
\qquad
\mathcal M(\mu)=\{u\in\mathcal X:\mathcal R(u;\mu)=0\}.
\]
If \((\mu_0,u_0)\in\mathcal M\) and
\[
D_u\mathcal R(u_0;\mu_0):\mathcal X\rightarrow\mathcal Y
\]
is a Banach-space isomorphism, then \((\mu_0,u_0)\) is called a regular point. Otherwise it is singular. The singular set is denoted by \(\Sigma\). The analysis focuses on local regular branches outside \(\Sigma\) and on local branches near isolated singular points for which fractional one-sided error bounds are available.

The following assumptions are used.

\begin{assumption}[Operator regularity]
The residual operator satisfies
\[
\mathcal R\in C^1(\mathcal X\times\mathcal P,\mathcal Y),
\]
and \(\mathcal M\neq\varnothing\) in the parameter domain considered.
\end{assumption}

\begin{assumption}[Local branch structure]
After removing \(\Sigma\), the solution manifold decomposes into finitely many local \(C^1\) branches:
\[
\mathcal M\setminus\Sigma=\bigcup_{\alpha\in A}\mathcal M_\alpha .
\]
At any time, the algorithm tracks one active branch \(\mathcal M_\alpha\).
\end{assumption}

\begin{assumption}[Local compactness]
The state set actually visited by the algorithm lies in a locally compact set \(K\subset\mathcal X\). This excludes unbounded oscillations, numerical blow-up, and escape into nonphysical states.
\end{assumption}

\begin{assumption}[Local branch distinguishability]
If two branches \(\mathcal M_\alpha\) and \(\mathcal M_\beta\) coexist in the same parameter neighborhood, then in noncritical regions there exists \(\delta_b>0\) such that
\[
\inf_{u_\alpha\in\mathcal M_\alpha(\mu),\,u_\beta\in\mathcal M_\beta(\mu)}
\|u_\alpha-u_\beta\|_{\mathcal X}\geq \delta_b .
\]
\end{assumption}

The assumptions do not imply global uniqueness or unconditional stability near all singularities. They require local invertibility on regular branches, finite branch coverage, and local geometric controllability through a one-sided residual bound near isolated bifurcations.

Assumption 4 is intentionally local. In nonlinear equilibrium problems, two branches can approach each other in parameter space and may even meet at a critical point. The algorithm does not require a uniform global separation over the entire parameter domain. It requires only that the snapshot pool used to construct the current reduced basis not contain incompatible states from different branches. This is enforced computationally by state-distance filtering and residual thresholds. If the separation becomes too small to maintain a branch-consistent snapshot pool, residual smallness is no longer interpreted as evidence of global correctness; instead, residual growth and loss of separation indicate that the local representation must be rebuilt with new high-fidelity information.

The compactness assumption has a similar computational interpretation. Residual norms alone may be insufficient if numerical iterates leave the physically admissible set and enter regions where the operator behaves pathologically. Restricting attention to a locally compact set \(K\) represents the combined effect of boundary conditions, conservation constraints, nonnegativity constraints, regularization, and solver safeguards. Within such a set, local constants in the estimates can be chosen uniformly over the region actually visited by the algorithm.

\subsection{Residual--error equivalence on regular local branches}

The residual certification mechanism rests on the fact that, near a regular branch, the residual norm controls the distance from a candidate state to the true active branch. Because a multibranch system may have several solutions at the same parameter, we write the local solution on the active branch as \(u_\alpha^\ast(\mu)\), not as a global solution \(u^\ast(\mu)\).

\begin{theorem}[Residual--error equivalence on a regular branch]
Let \((\mu_0,u_0)\in\mathcal M_\alpha\) be a regular point on the current active branch, and assume that
\[
D_u\mathcal R(u_0;\mu_0):\mathcal X\rightarrow\mathcal Y
\]
is a Banach-space isomorphism. Then there exist a parameter neighborhood \(U\subset\mathcal P\), a state neighborhood \(V\subset\mathcal X\), a unique local branch map
\[
u_\alpha^\ast:U\rightarrow V,\qquad
\mathcal R(u_\alpha^\ast(\mu);\mu)=0,
\]
and constants \(0<c_1\leq c_2<\infty\) such that for all \(\mu\in U\) and \(u\in V\),
\[
c_1\|u-u_\alpha^\ast(\mu)\|_{\mathcal X}
\leq
\|\mathcal R(u;\mu)\|_{\mathcal Y}
\leq
c_2\|u-u_\alpha^\ast(\mu)\|_{\mathcal X}.
\]
\end{theorem}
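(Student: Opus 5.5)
The plan is to use the implicit function theorem to produce the branch map, and then a mean-value (integral) representation of the residual along the segment joining \(u\) to \(u_\alpha^\ast(\mu)\), combined with continuity of \(D_u\mathcal R\), to obtain both inequalities. Throughout, write \(A_0=D_u\mathcal R(u_0;\mu_0)\). By hypothesis \(A_0\) is a Banach-space isomorphism, so \(\|A_0\|<\infty\) and \(\|A_0^{-1}\|<\infty\). Assumption 1 (\(\mathcal R\in C^1\)) allows us to apply the implicit function theorem at \((\mu_0,u_0)\). It gives neighborhoods \(U_0\ni\mu_0\) and \(V_0\ni u_0\) and a unique \(C^1\) map \(u_\alpha^\ast:U_0\to V_0\) with \(\mathcal R(u_\alpha^\ast(\mu);\mu)=0\). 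Moreover, every zero of \(\mathcal R(\cdot;\mu)\) in \(V_0\) equals \(u_\alpha^\ast(\mu)\). Since \((\mu_0,u_0)\in\mathcal M_\alpha\) and \(\mathcal M_\alpha\) is a local \(C^1\) branch (Assumption 2), this map parametrizes the active branch near \((\mu_0,u_0)\).

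The key step is a uniform perturbation bound. By continuity of \(D_u\mathcal R\) in \((u,\mu)\), we choose \(\rho>0\) and a neighborhood \(U\subset U_0\) of \(\mu_0\) such that
\[
\|D_u\mathcal R(w;\mu)-A_0\|\le \tfrac{1}{2\|A_0^{-1}\|}
\quad\text{for all } w\in B(u_0,2\rho),\ \mu\in U .
\]
Shrinking \(U\) further, continuity of \(u_\alpha^\ast\) gives \(u_\alpha^\ast(\mu)\in B(u_0,\rho)\) for \(\mu\in U\). We then set \(V=B(u_0,\rho)\cap V_0\). This set is convex after intersecting with a ball contained in \(V_0\), so we may take \(V=B(u_0,\rho)\) with \(\rho\) small.

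Now fix \(\mu\in U\) and \(u\in V\), and let \(e=u-u_\alpha^\ast(\mu)\). The segment joining \(u\) and \(u_\alpha^\ast(\mu)\) lies in \(B(u_0,\rho)\subset B(u_0,2\rho)\). The fundamental theorem of calculus in Banach spaces then gives
\[
\mathcal R(u;\mu)=\mathcal R(u;\mu)-\mathcal R(u_\alpha^\ast(\mu);\mu)=\Big(\int_0^1 D_u\mathcal R\big(u_\alpha^\ast(\mu)+se;\mu\big)\,ds\Big)e=:B_\mu e .
\]
The averaged operator satisfies \(\|B_\mu-A_0\|\le 1/(2\|A_0^{-1}\|)\). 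This yields the upper bound with \(c_2=\|A_0\|+1/(2\|A_0^{-1}\|)\). For the lower bound, we write \(B_\mu e=A_0e+(B_\mu-A_0)e\) and use \(\|A_0e\|\ge\|e\|/\|A_0^{-1}\|\). This gives \(\|\mathcal R(u;\mu)\|_{\mathcal Y}\ge \|e\|_{\mathcal X}/(2\|A_0^{-1}\|)\), so \(c_1=1/(2\|A_0^{-1}\|)\). Since \(\|A_0\|\,\|A_0^{-1}\|\ge1\), it follows that \(c_1\le c_2\).

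The main obstacle is the bookkeeping of the neighborhoods. \(V\) must be convex, so that the segment argument applies. It must contain all the \(u_\alpha^\ast(\mu)\) for \(\mu\in U\). And it must lie inside the ball where the derivative is uniformly close to \(A_0\). Choosing \(\rho\) first from the derivative continuity and then shrinking \(U\) via continuity of the branch map resolves this. The uniform closeness of \(D_u\mathcal R\) to \(A_0\) on this set is what makes \(c_1\) and \(c_2\) independent of \(\mu\).

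Note that the lower bound also reproves uniqueness of the zero in \(V\) independently of the implicit function theorem. It also identifies \(C_r=1/c_1\) in the multibranch estimate \(\operatorname{dist}_{\mathcal X}(u,\mathcal M_\alpha(\mu))\le C_r\|\mathcal R(u;\mu)\|_{\mathcal Y}\) stated earlier.
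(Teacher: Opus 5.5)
Your proof is correct. It follows the paper's outline for the branch map (implicit-function theorem) and for the upper bound (integral mean-value formula along the segment from \(u_\alpha^\ast(\mu)\) to \(u\)), but it obtains the lower bound by a different and more elementary route.

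The paper invokes the local inverse-mapping theorem for \(u\mapsto\mathcal R(u;\mu)\) at each \(\mu\). It then simply asserts that the bi-Lipschitz constants are ``uniform if the neighborhood is chosen small enough.'' You instead show that the averaged Jacobian \(B_\mu=\int_0^1 D_u\mathcal R(u_\alpha^\ast(\mu)+se;\mu)\,ds\) stays within \(1/(2\|A_0^{-1}\|)\) of \(A_0\) on one fixed convex ball, for all \(\mu\in U\). Both inequalities then follow from the single identity \(\mathcal R(u;\mu)=B_\mu e\).

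Your route gives you three things:
\begin{enumerate}
\item Explicit constants \(c_1=1/(2\|A_0^{-1}\|)\) and \(c_2=\|A_0\|+c_1\).
\item An actual justification of uniformity in \(\mu\). Applying the inverse-mapping theorem separately at each \(\mu\) produces \(\mu\)-dependent neighborhoods and constants. The paper's claim therefore needs either your perturbation estimate or the inverse-function theorem applied to \((u,\mu)\mapsto(\mathcal R(u;\mu),\mu)\), and it supplies neither.
\item Uniqueness of the zero in \(V\) as a byproduct.
\end{enumerate}
The paper's version is shorter only because it delegates this step to a cited theorem.

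Two small tidy-ups:
\begin{enumerate}
\item Fix \(\rho\) so that both the derivative bound holds and \(B(u_0,\rho)\subset V_0\) before you shrink \(U\). As written, you shrink \(U\) and then say ``\(\rho\) small'' again, which could break \(u_\alpha^\ast(\mu)\in B(u_0,\rho)\).
\item \(c_1\le c_2\) is immediate from \(c_2=\|A_0\|+c_1\), so the condition-number remark is unnecessary. Likewise the ball \(B(u_0,2\rho)\) is never needed, since the segment already lies in \(B(u_0,\rho)\).
\end{enumerate}
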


\begin{proof}
Since \(D_u\mathcal R(u_0;\mu_0)\) is an isomorphism and \(\mathcal R\in C^1\), the implicit-function theorem gives neighborhoods \(U\) and \(V\) in which the active branch is represented by a unique \(C^1\) map \(u_\alpha^\ast(\mu)\). The local inverse-mapping theorem then implies that, for each \(\mu\in U\), the map \(u\mapsto\mathcal R(u;\mu)\) is locally bi-Lipschitz on \(V\), with constants uniform if the neighborhood is chosen small enough. Hence
\[
\|u-u_\alpha^\ast(\mu)\|_{\mathcal X}
\leq
C\|\mathcal R(u;\mu)-\mathcal R(u_\alpha^\ast(\mu);\mu)\|_{\mathcal Y}.
\]
Because \(\mathcal R(u_\alpha^\ast(\mu);\mu)=0\), the lower bound follows with \(c_1=C^{-1}\). The upper bound follows from the local boundedness of \(D_u\mathcal R\) and the Banach-space integral mean-value formula:
\[
\mathcal R(u;\mu)-\mathcal R(u_\alpha^\ast(\mu);\mu)
=\int_0^1
D_u\mathcal R(u_\alpha^\ast(\mu)+s(u-u_\alpha^\ast(\mu));\mu)
(u-u_\alpha^\ast(\mu))\,ds .
\]
Taking norms gives the desired upper inequality.
\end{proof}

Thus, in a regular neighborhood, the physical residual is an equivalent measure of the distance from a candidate state to the active branch. If
\[
\eta(u;\mu)=\|\mathcal R(u;\mu)\|_{\mathcal Y}
\]
and \(\eta(u;\mu)\leq\tau\), then
\[
\|u-u_\alpha^\ast(\mu)\|_{\mathcal X}\leq c_1^{-1}\tau .
\]
This local residual certification is consistent with the residual-based a posteriori viewpoint used in certified reduced-basis methods, although the present work uses it for online local model management rather than only for offline error certification \cite{rozza2008reduced,benner2015survey}. For multibranch systems it is often more appropriate to write
\[
\operatorname{dist}_{\mathcal X}(u,\mathcal M_\alpha(\mu))
\leq C_r\|\mathcal R(u;\mu)\|_{\mathcal Y},
\]
where
\[
\operatorname{dist}_{\mathcal X}(u,\mathcal M_\alpha(\mu))
=\inf_{v\in\mathcal M_\alpha(\mu)}\|u-v\|_{\mathcal X}.
\]
This form does not require a globally unique solution; it requires only local distinguishability and regularity of the active branch.

The theorem also explains why a small residual should not be interpreted without specifying the branch neighborhood. If two exact solutions coexist at the same parameter and are both close to the candidate in residual norm, the residual alone identifies proximity to the solution set but not necessarily to the desired physical branch. The active-branch notation \(\mathcal M_\alpha\) and the state-distance filtering mechanism are therefore required to turn residual smallness into branch-consistent model management. In regular single-branch regions this distinction disappears; in multibranch regions it is central.

There is also an asymmetry between acceptance and update. To accept a candidate, one needs an upper bound on the state error in terms of the residual. To trigger an update, it is enough to observe that the residual has exceeded the certified tolerance. The algorithm does not need to know the exact state error at the moment of failure. This is important because high-fidelity reference states are expensive and are called only after the residual indicates that they are needed.

\subsection{One-sided residual control near bifurcation}

Theorem 1 applies only to regular branches. Near Bratu-type folds,
\[
D_u\mathcal R(u;\mu)
\]
may be noninvertible, and the ordinary residual--error relation may cease to be linearly two-sided. Nevertheless, for adaptivity it is enough that the error be controlled by the residual from one side.

Let \(\mathcal Z_\mu=\{u:\mathcal R(u;\mu)=0\}\). Suppose that near the active branch there exist \(C>0\), \(0<\alpha\leq 1\), and \(r>0\) such that, whenever \(\|u-u_c\|_{\mathcal X}<r\) and \(\|\mu-\mu_c\|<r\),
\[
\operatorname{dist}_{\mathcal X}(u,\mathcal Z_\mu)
\leq
C\|\mathcal R(u;\mu)\|_{\mathcal Y}^{\alpha}.
\]
Then
\[
\|\mathcal R(u;\mu)\|_{\mathcal Y}\leq\tau
\quad\Longrightarrow\quad
\operatorname{dist}_{\mathcal X}(u,\mathcal Z_\mu)
\leq C\tau^\alpha .
\]
When \(\alpha=1\), this is the regular Lipschitz-type bound. When \(0<\alpha<1\), the residual must be reduced more aggressively to achieve the same state-error tolerance. For a prescribed \(\varepsilon_{\mathrm{tol}}\), the threshold may be chosen as
\[
\tau\leq\left(\frac{\varepsilon_{\mathrm{tol}}}{C}\right)^{1/\alpha}.
\]

The fold normal form \(F(x,\lambda)=x^2-\lambda\) illustrates square-root control at the critical point, and it is the same local degeneracy that motivates continuation methods near folds \cite{keller1977numerical}. The finite-branch pitchfork normal form \(F(x,\lambda)=x^3-\lambda x\) shows that multibranch cases require branch separation or active-branch selection. More generally, H\"older-type residual-to-distance estimates are compatible with analytic and tame error-bound theory such as the Lojasiewicz and Kurdyka--Lojasiewicz frameworks \cite{bolte2007clarke}. If the number of branches is finite, inactive branches remain locally separated from the current snapshot pool, and the singular points do not form a continuous degenerate set, the residual remains a useful indicator of local model failure.

\begin{remark}[Fold and pitchfork normal forms]
The fractional exponent \(\alpha\) can be interpreted as a local regularity index of the residual map near the degenerate point. For the fold normal form at \(\lambda=0\), \(F(x,0)=x^2\), so \(|x|\leq |F(x,0)|^{1/2}\). The residual therefore controls the error only with exponent \(1/2\). For a pitchfork-type normal form at the critical parameter, \(F(x,0)=x^3\), a cubic degeneracy gives an exponent \(1/3\) if all local branches are considered together. If the active branch is restricted by additional physical constraints or by a one-sided parameter path, the effective exponent may improve on that restricted neighborhood. This illustrates why finite-branch structure and active-branch selection matter in residual certification.
\end{remark}

\begin{remark}[Threshold interpretation near degeneracy]
When \(\alpha<1\), the same residual tolerance corresponds to a larger state-error tolerance than in the regular case. A threshold that is adequate on a regular branch may therefore be too loose near a degenerate point. The algorithmic response is not to assume linear equivalence, but to shrink the admissible residual level, reduce the parameter step, and refresh the high-fidelity anchor more frequently. This is precisely the behavior expected in the Bratu experiment.
\end{remark}

\subsection{Feasibility conditions for local POD approximation}

Let the current snapshot set be
\[
\mathcal S_k=\{u_1,\ldots,u_m\},
\]
and the POD basis be
\[
\Phi_k=[\phi_1,\ldots,\phi_r].
\]
For existing snapshots, the POD truncation error satisfies
\[
\left(\sum_{j>r}\sigma_j^2\right)^{1/2}
=\varepsilon_{\mathrm{POD}}.
\]
This error controls the snapshots directly, not the entire parameter neighborhood. If the active branch map \(\mu\mapsto u_\alpha^\ast(\mu)\) is locally Lipschitz or \(C^1\) on \(\mathcal P_k\), then for \(\rho_k=\operatorname{diam}(\mathcal P_k)\) one has
\[
\operatorname{dist}_H(u_\alpha^\ast(\mu),\operatorname{span}\Phi_k)
\leq C_p\varepsilon_{\mathrm{POD}}+C_{\mathrm{loc}}\rho_k .
\]
The first term is spectral truncation error, and the second is local extrapolation or curvature error. Thus local POD is effective when the local curvature is small, the parameter step is short, and snapshot coverage is sufficient. When the curvature increases sharply or the system enters a bifurcation neighborhood, the term \(C_{\mathrm{loc}}\rho_k\) can grow quickly, causing local-basis failure.

This estimate separates statistical and geometric errors. The POD truncation term depends on the singular-value decay of the available snapshots and can be reduced by increasing the rank or improving local snapshot diversity. The curvature term depends on how far the current parameter has moved from the region represented by the snapshots. It cannot be eliminated by increasing the rank alone if the snapshots do not cover the relevant portion of the active branch. Residual triggering is therefore necessary even when the POD energy truncation appears small: the reduced space may approximate old snapshots accurately while failing to approximate the current state.

\subsection{Snapshot maintenance with branch consistency}

The goal of snapshot maintenance is to construct a local basis associated with the current active branch, not to preserve all historical data. Let the updated high-fidelity state be \(u_{\mathrm{new}}\). The snapshot update is
\[
\mathcal S_{k+1}=
\operatorname{Forget}\bigl(\mathcal S_k\cup\{u_{\mathrm{new}}\};u_{\mathrm{new}},R_k\bigr),
\]
where
\[
\operatorname{Forget}(\mathcal A;u_{\mathrm{ref}},R)
=\{u\in\mathcal A:\|u-u_{\mathrm{ref}}\|_H\leq R\}.
\]
Alternatively, the closest \(m\) snapshots may be retained. If branches are locally separated by \(\delta_b\) and \(R_k<\delta_b/2\), snapshots from other branches are excluded from the local pool. For Bratu-type bifurcation systems, state distance is more informative than parameter distance because different branches may correspond to nearly identical parameter values but have distinct local directions in state space.

The forgetting mechanism introduces a controlled loss of historical information. This loss is beneficial when old states lie outside the current local validity region, but harmful if the pool becomes too small or loses important tangent directions. In implementation, the snapshot pool should maintain a minimal size \(m_{\min}\) and may retain a few directionally informative states even if they are not the nearest in norm. A practical rule is to combine nearest-neighbor retention with a rank check on the local snapshot matrix. If the smallest retained singular values collapse, the algorithm can temporarily enlarge the radius or request additional high-fidelity samples. This prevents the local POD basis from becoming underdetermined after aggressive forgetting.

Geometrically, POD builds the best linear approximation to the current snapshot cloud. If the cloud straddles two branches, its principal directions may point between physically valid states. State-distance forgetting shrinks the cloud around the active branch so that the POD modes approximate local tangent and curvature directions of that branch. This is why the method is useful in multibranch problems even without an explicit global branch-continuation module.

\subsection{Single-step feasibility chain}

At parameter \(\mu_k\), suppose the local POD approximation yields \(u_{\mathrm{low}}\). If
\[
\eta_k=\|\mathcal R(u_{\mathrm{low}};\mu_k)\|_{\mathcal Y}\leq\tau,
\]
then by residual--error control,
\[
\|u_{\mathrm{low}}-u_\alpha^\ast(\mu_k)\|_{\mathcal X}\leq c_1^{-1}\tau
\]
in a regular neighborhood. The candidate is therefore acceptable under the current threshold.

If \(\eta_k>\tau\), the current low-dimensional space may no longer cover the true state branch. The algorithm calls the high-fidelity solver to obtain \(u_h(\mu_k)\), updates the snapshots, and reconstructs the POD basis. If the linearized operator is ill-conditioned or the path enters a fold neighborhood, the parameter step is reduced and the branch-separation criterion in snapshot forgetting is strengthened.

Finally, the compensator is introduced on top of the low-dimensional backbone:
\[
u_{\mathrm{hyb}}=u_{\mathrm{low}}+g_\theta.
\]
The network minimizes
\[
\|\mathcal R(u_{\mathrm{low}}+g_\theta;\mu_k)\|_{\mathcal Y}^2
\]
together with initial and boundary losses, thereby compensating high-frequency and strongly nonlinear residual components. Since POD captures the low-frequency backbone, \(g_\theta\) only approximates the remaining residual component and is more stable than a full-field PINN approximation.

This chain also explains why residual certification should precede neural correction. If the POD candidate is far from the active branch, a neural network trained only to reduce residual collocation loss may move the state toward a different branch or a nonphysical local minimizer. By first checking whether the POD candidate lies in a certified neighborhood, the method confines neural compensation to small residual components for which local correction is meaningful. When the candidate fails certification, the correct response is not to increase neural-network capacity, but to refresh the local physical information.

\subsection{Deterministic residual growth and update interval}

Let the parameter step be
\[
\Delta\mu_k=\mu_{k+1}-\mu_k.
\]
In a regular neighborhood, if \(\mathcal R\) is locally Lipschitz in \(u\) and \(\mu\), and local POD extrapolation is controlled, then there exist constants \(C_\mu,C_p,C_\theta>0\) such that
\[
\eta_{k+1}\leq
\eta_k+C_\mu\|\Delta\mu_k\|
+C_p\varepsilon_{\mathrm{POD}}
+C_\theta\varepsilon_\theta
+\mathcal O(\|\Delta\mu_k\|^2),
\]
where \(\varepsilon_\theta\) denotes the residual error not captured by the neural compensator. Residual growth arises from parameter-induced manifold drift, POD truncation and extrapolation, and neural compensation error.

For a fixed step \(\|\Delta\mu_k\|=\Delta\mu\), set
\[
\delta_\eta=C_\mu\Delta\mu+C_p\varepsilon_{\mathrm{POD}}+C_\theta\varepsilon_\theta.
\]
The number of consecutive steps \(m\) between two high-fidelity updates approximately satisfies
\[
m\delta_\eta\lesssim\tau-\eta_0,
\]
so that
\[
m_{\mathrm{upd}}\approx
\frac{\tau-\eta_0}
{C_\mu\Delta\mu+C_p\varepsilon_{\mathrm{POD}}+C_\theta\varepsilon_\theta}.
\]
Reducing the parameter step, improving POD truncation, and reducing neural compensation error all lengthen the valid advance distance between high-fidelity calls. In strongly nonlinear or bifurcating neighborhoods, the residual growth rate increases and the update interval shortens.

The estimate is deliberately deterministic rather than probabilistic. It does not assume a distribution over parameters or residual increments. Instead, it identifies the quantities that control update frequency along a realized parameter path. This is appropriate for inverse problems and continuation-like scans where the path is generated adaptively by an outer optimizer. If the optimizer takes large steps in a region of high curvature, \(\|\Delta\mu_k\|\) increases residual growth and triggers more high-fidelity calls. If the optimizer approaches a smooth valley, residual growth slows and the reduced model can be reused for more steps.

\subsection{Computational complexity}

Let \(T_{\mathrm{ROM}}\) be the cost of one reduced solve, \(T_{\mathrm{NN}}\) the cost of training or updating the compensator, and \(T_{\mathrm{HFM}}\) the cost of one high-fidelity resampling. If there are \(K\) parameter steps and \(N_{\mathrm{upd}}\) high-fidelity updates, the total cost is
\[
T_{\mathrm{total}}
=K(T_{\mathrm{ROM}}+T_{\mathrm{NN}})
+N_{\mathrm{upd}}T_{\mathrm{HFM}}.
\]
The update-interval estimate gives
\[
N_{\mathrm{upd}}\approx \frac{K}{m_{\mathrm{upd}}}.
\]
The advantage of the algorithm arises when
\[
N_{\mathrm{upd}}\ll K,
\]
meaning that the high-fidelity solver is called only when the residual exceeds the threshold, the local manifold curves rapidly, or the branch approaches a critical region. For OR, \(N_{\mathrm{upd}}\) is usually small. For PMC, strong nonlinearity and nonlocal coupling increase \(N_{\mathrm{upd}}\), but neural compensation can reduce frequent resampling. For Bratu, residual growth and condition-number deterioration near the critical parameter shorten the update interval and increase high-fidelity calls.

This cost model highlights that the method is not designed to eliminate high-fidelity computation. Rather, it redistributes high-fidelity effort toward informative points. A static offline reduced model pays a large cost before knowing which regions of the parameter space will be visited. The present algorithm pays high-fidelity cost online only when residual certification indicates that the current local representation has become inadequate. This is especially advantageous in inverse problems, where the optimizer may explore only a small subset of the nominal parameter domain.

The neural compensation cost \(T_{\mathrm{NN}}\) may be interpreted in two ways. If the network is retrained from scratch at every step, \(T_{\mathrm{NN}}\) can dominate the reduced solve. In the intended use, the compensator is warm-started and updated locally, so that it behaves as a small corrective model rather than a full surrogate. Residual-triggered high-fidelity updates also improve neural training by supplying fresh local information, reducing the burden on the network to extrapolate.

\subsection{Global error decomposition}

Let \(u^\ast\) be the exact solution, \(u_h\) the high-fidelity discrete solution, \(u_{\mathrm{low}}\) the POD solution, and
\[
u_{\mathrm{hyb}}=u_{\mathrm{low}}+g_\theta.
\]
In a regular branch neighborhood,
\[
u_{\mathrm{hyb}}-u^\ast
=(u_h-u^\ast)
+(u_{\mathrm{low}}-u_h)
+(g_\theta-u_{\mathrm{res}}^\ast)
+e_{\mathrm{trig}},
\]
where the terms represent high-fidelity discretization error, POD projection and local extrapolation error, neural residual compensation error, and residual-threshold induced update-lag error.

If
\[
\|u_h-u^\ast\|_{\mathcal X}\leq\varepsilon_h,\qquad
\|u_{\mathrm{low}}-u_h\|_{\mathcal X}\leq
C_p\varepsilon_{\mathrm{POD}}+C_{\mathrm{loc}}\rho_k,
\]
\[
\|g_\theta-u_{\mathrm{res}}^\ast\|_{\mathcal X}\leq\varepsilon_\theta,
\qquad
\|e_{\mathrm{trig}}\|_{\mathcal X}\leq C_r\tau,
\]
then
\[
\|u_{\mathrm{hyb}}-u^\ast\|_{\mathcal X}
\leq
C_1\varepsilon_h+C_2\varepsilon_{\mathrm{POD}}
+C_3\rho_k+C_4\varepsilon_\theta+C_5\tau.
\]
For evolutionary systems on \([0,T]\), a Lipschitz stability assumption adds a Gronwall factor:
\[
\sup_{0\leq t\leq T}
\|u_{\mathrm{hyb}}(t)-u^\ast(t)\|_{\mathcal X}
\leq
e^{LT}
\left(
C_1\varepsilon_h+C_2\varepsilon_{\mathrm{POD}}
+C_3\rho_k+C_4\varepsilon_\theta+C_5\tau
\right).
\]
In Bratu-type bifurcation neighborhoods, one should not use the regular-branch linear bound. A fractional one-sided estimate is more appropriate:
\[
\operatorname{dist}_{\mathcal X}(u,\mathcal Z_\mu)
\leq
C\|\mathcal R(u;\mu)\|_{\mathcal Y}^{\alpha},
\qquad 0<\alpha\leq 1.
\]
This avoids incorrectly applying linear residual--error equivalence at a degenerate \(D_u\mathcal R\), and it explains why residual thresholds must be chosen more cautiously near bifurcation.

The error decomposition is additive in presentation, but the terms are not completely independent. A poor POD basis can increase the difficulty of neural compensation, thereby increasing \(\varepsilon_\theta\). A loose residual threshold can delay high-fidelity updates, increasing \(\rho_k\) and the local extrapolation error. Conversely, frequent high-fidelity updates reduce \(\rho_k\) and improve the POD basis, but they increase computational cost. The algorithm balances accuracy and efficiency through the residual threshold, snapshot radius, reduced rank, and compensation strength. These quantities should be tuned together rather than in isolation.

The decomposition also clarifies the role of the high-fidelity solver. The term \(\varepsilon_h\) is the irreducible error of the reference discretization. The hybrid model is not expected to be more accurate than the high-fidelity data on which it is anchored unless additional regularization or data assimilation is used. Its goal is to approach high-fidelity accuracy over many parameter steps at a lower average cost, while detecting when high-fidelity reanchoring is necessary.

\subsection{Theoretical boundaries and scope}

The theoretical scope consists of three regimes. First, in regular low-dimensional regions, \(D_u\mathcal R\) is invertible, the state-manifold curvature is small, and local POD captures the dominant structure. OR mainly tests this regime. Second, in regular but strongly nonlinear regions, \(D_u\mathcal R\) remains locally invertible, but curvature grows and POD extrapolation error increases. PMC belongs to this class, where nonlocal coupling makes neural residual compensation important. Third, in singular or nearly singular regions, the condition number of \(D_u\mathcal R\) deteriorates and linear residual--error equivalence fails. The fold point of the Bratu equation belongs to this class; the residual should be interpreted as a local failure indicator under fractional one-sided control and used together with state distance, condition-number monitoring, and high-fidelity resampling.

Thus the paper does not claim unconditional global convergence for arbitrary nonlinear dynamical systems. It establishes residual equivalence on regular branches and uses one-sided residual control near isolated fold or finite-branch bifurcations under additional branch-separation assumptions. Under these conditions, local POD, residual triggering, high-fidelity update, and neural compensation form an interpretable adaptive solution-manifold tracking algorithm.

\section{Numerical Experiments and Algorithmic Assessment}

\subsection{Experimental objective and progressive validation}

To assess the residual-certified adaptive hybrid algorithm across different dynamical-system complexities, this section considers the Ostwald ripening equation, a particle population-balance model, and the one-dimensional Bratu equation. These examples form a progressive validation chain from regular to singular, from low-dimensional to strongly nonlinear, and from single-branch to multibranch behavior.

The Ostwald ripening equation represents a low-dimensional nearly linear or weakly nonlinear system. Its state evolution is governed mainly by a few low-frequency modes, making it suitable for testing whether a local POD basis captures the backbone of the solution manifold and whether residual certification preserves physical branch consistency during parameter inversion.

The particle population-balance model represents a strongly nonlinear nonlocal system. It includes integral global feedback, nonnegativity constraints, and long-time evolution of a distribution function. Its solution manifold is more curved than that of OR and is more likely to produce local high-frequency oscillations and nonphysical negative values. This example tests the necessity of spectral decoupling and lightweight neural residual compensation.

The Bratu equation represents a parametric equilibrium solution manifold with a fold bifurcation. Near the critical parameter it exhibits Jacobian degeneracy and multiple solution branches. It tests the validity boundary of ordinary residual certification, the state-distance forgetting mechanism, and high-fidelity resampling.

In all experiments, high-fidelity models provide benchmark states. The reduced module extracts dominant POD modes. The residual compensation module uses a lightweight multilayer perceptron trained by physical residuals, boundary-condition losses, and regularization. Evaluation focuses on local manifold reconstruction, residual-triggered snapshot updates, stability of parameter inversion or branch tracking, empirical correspondence between residual and true error, and online cost as a function of parameter step and manifold complexity.

The experiments are designed to evaluate mechanisms rather than to optimize a single benchmark score. Each example emphasizes a distinct failure mode of standard surrogates. OR tests whether a local linear space is sufficient when the state manifold is smooth and low-dimensional. PMC tests whether a linear space remains adequate when nonlocal nonlinear feedback generates high-frequency residuals. Bratu tests whether residual certification can still guide local model management when the Jacobian becomes nearly singular. Across all examples, the same diagnostic quantities are recorded: residual norm, state error relative to high-fidelity reference, number and location of high-fidelity updates, reduced dimension, and online runtime. This common diagnostic structure is what makes the three examples comparable despite their different physical forms.

For the figures, the goal is not only to show final accuracy but to display the algorithmic logic. The OR and PMC figures illustrate problem-specific reconstruction and correction behavior. The composite figure reports cross-method comparisons, residual-control behavior, complexity regression, and the Bratu solution structure. Together, these four figures provide supporting evidence for the paper: the method compresses smooth low-dimensional dynamics, corrects nonlinear residuals in nonlocal systems, detects loss of validity near bifurcation, and concentrates computational cost at difficult parameter regions.

These numerical results should be read as mechanism-level evidence rather than as a claim of unconditional superiority over all reduced-order or neural solvers. The theoretical claims are local, and the experiments are used to check whether the observed residual, update, and correction behavior is consistent with those local claims.

\subsection{Ostwald ripening: local compression in low-dimensional dynamics}

Ostwald ripening describes the evolution of particle-size distributions in multiphase materials under interfacial energy and diffusion mechanisms; the classical mean-field description is the Lifshitz--Slyozov--Wagner theory \cite{lifshitz1961kinetics,wagner1961alterung}. Although its governing equation can be written as a parametric differential equation, the central task is to characterize how the state evolves along a physically compatible low-dimensional branch as the control parameter varies.

Let \(f(r,t;\mu)\) denote the particle-size distribution. A typical mean-field form is
\[
\partial_t f+\partial_r\bigl(v(r;\mu)f\bigr)=0,
\]
with appropriate conservation and normalization constraints. The parameter \(\mu\) controls growth rate, interfacial energy, or diffusion scale. The residual operator can be written as
\[
\mathcal R_{\mathrm{OR}}(f;\mu)
=\partial_t f+\partial_r(v(r;\mu)f).
\]
Snapshots are generated at different parameter values and time stages. POD extracts the principal distribution modes, and the reduced representation is
\[
f_{\mathrm{low}}(r,t;\mu)
=\bar f(r,t)+\sum_{j=1}^r a_j(\mu)\phi_j(r,t).
\]

The numerical results indicate that, under a two-level optimization architecture, the initial state converges toward a physical steady distribution with compact-support characteristics. The local POD basis captures the low-frequency backbone of the distribution, enabling online inference in a low-dimensional coordinate space. During parameter optimization, the outer objective displays a clear valley structure, indicating that physically compatible parameters lie in a stable minimum region. The trajectories of the total loss and intrinsic parameter \(\gamma\) suggest that the outer optimizer approaches the physical parameter along a relatively smooth descent direction.

The residual-triggering mechanism behaves as expected. In smooth parameter regions, the residual remains below the threshold, and the local POD basis remains valid over multiple steps. When the parameter path leaves the local neighborhood or distributional morphology changes significantly, the residual
\[
\eta_k=\|\mathcal R_{\mathrm{OR}}(f_{\mathrm{low}};\mu_k)\|
\]
increases and triggers high-fidelity resampling. The corresponding update nodes are a posteriori responses to local manifold deviation.

The OR example is also useful for interpreting the role of residual certification in an inverse or outer-optimization loop. In low-dimensional systems, one might expect POD alone to be sufficient. The experiment shows that POD indeed carries most of the approximation burden, but the residual still plays an important supervisory role. It prevents the optimizer from trusting a reduced state after the parameter path has moved outside the local snapshot region. In this regime, high-fidelity updates are infrequent, and the online cost is dominated by reduced solves rather than resampling. This is the favorable case for the proposed method: the residual certificate is rarely violated, but when it is violated, it prevents the reduced model from silently extrapolating.

Figure~\ref{fig:or} should therefore be read as evidence of local manifold coherence. Smooth changes in the reconstructed distributions indicate that the POD basis is aligned with the physical branch. Residual-triggered update points mark transitions where the old local affine space no longer represents the current distribution accurately enough. The absence of large oscillatory corrections in this example is consistent with the interpretation that OR is primarily a low-dimensional backbone problem rather than a high-frequency residual-compensation problem.

\begin{figure}[htbp]
\centering
\includegraphics[width=0.96\textwidth]{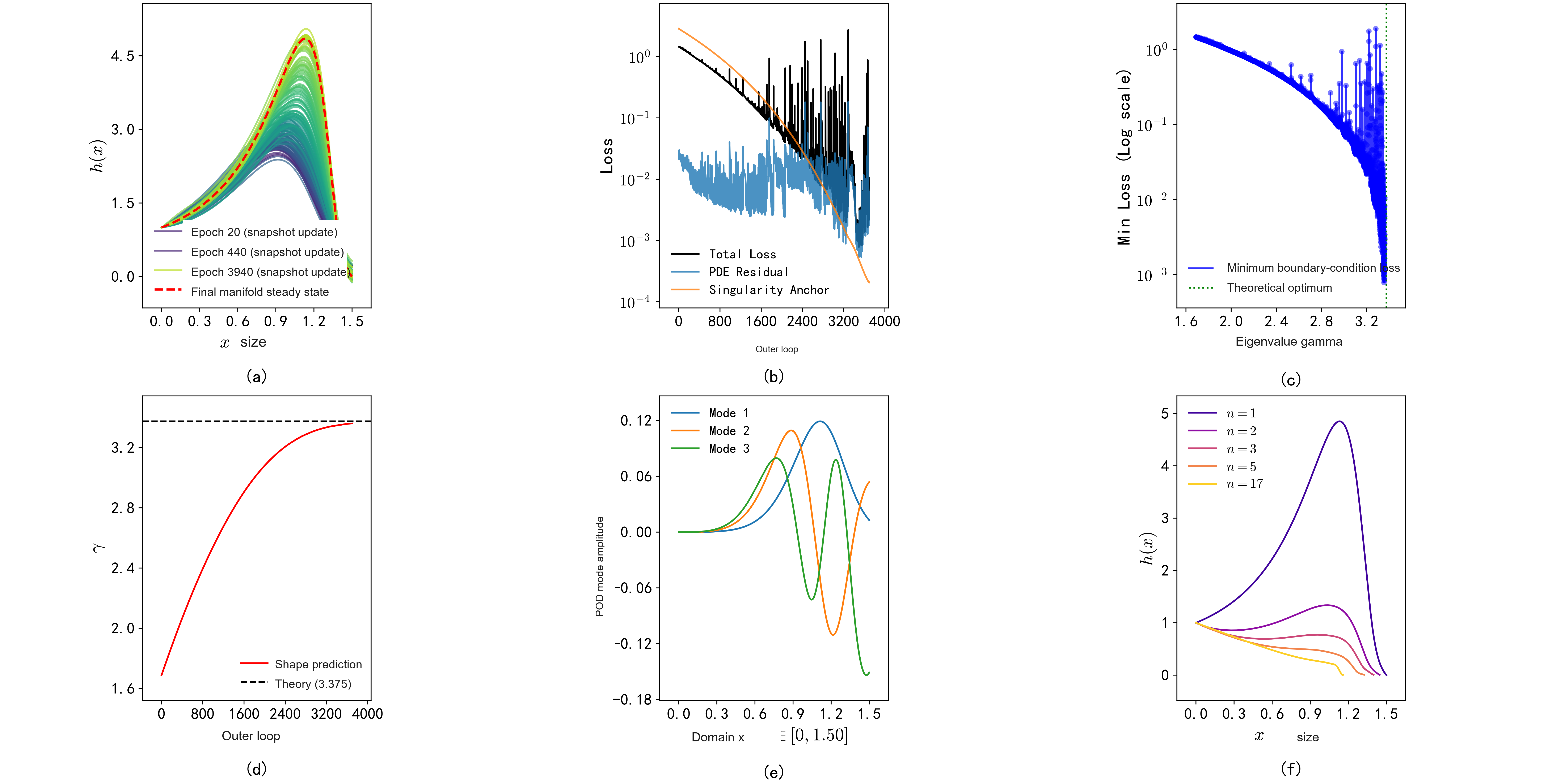}
\caption{Results for the Ostwald ripening example. The local POD model captures the dominant low-dimensional backbone of the distribution, while residual-triggered updates preserve consistency along the physical parameter branch.}
\label{fig:or}
\end{figure}

\subsection{Particle population balance: residual compensation in nonlinear nonlocal dynamics}

Particle population-balance models describe the evolution of particle distributions under coagulation, fragmentation, nucleation, and related mechanisms \cite{ramkrishna2000population,marchisio2005solution}. Compared with OR, the main difficulty is not only whether the low-dimensional manifold can be compressed; nonlocal integral coupling and strongly nonlinear feedback can significantly alter the local geometry of the solution manifold.

Let \(n(x,t;\mu)\) be a particle distribution. A representative model is
\[
\partial_t n(x,t)
=\mathcal Q_{\mathrm{coag}}[n](x)
+\mathcal Q_{\mathrm{frag}}[n](x)
+S(x;\mu),
\]
where \(\mathcal Q_{\mathrm{coag}}\) and \(\mathcal Q_{\mathrm{frag}}\) contain nonlocal integral terms. The residual is
\[
\mathcal R_{\mathrm{PMC}}(n;\mu)
=\partial_t n-\mathcal Q_{\mathrm{coag}}[n]
-\mathcal Q_{\mathrm{frag}}[n]-S(x;\mu).
\]
The nonlinearity and nonlocality make the residual sensitive to local high-frequency errors. A purely low-dimensional projection may reproduce the main distribution profile but still generate negative values, oscillations, or incorrect tail behavior.

The hybrid model writes
\[
n_{\mathrm{hyb}}=n_{\mathrm{low}}+g_\theta.
\]
Here \(n_{\mathrm{low}}\) is the POD reconstruction, and \(g_\theta\) compensates the residual component not captured by the linear subspace. This decoupling is important: the neural network does not learn the full population distribution. It learns only the missing high-frequency or nonlinear residual component after POD has captured the dominant structure.

The numerical results indicate that spectral decoupling and residual compensation significantly reduce nonphysical oscillations in this test. Compared with a pure POD reconstruction, the hybrid model better maintains nonnegativity and conservation-related quantities. Compared with a pure PINN, the network has a smaller hypothesis space and more stable optimization because the low-frequency backbone has already been represented by POD. The residual curves indicate that the compensator suppresses the accumulation of high-frequency error over the parameter path.

The PMC example highlights a different use of the residual. Here a small POD truncation error in the energy norm does not necessarily imply a small physical residual. Integral terms and nonlinear source terms can amplify errors that are visually small in the state profile. The neural correction is therefore evaluated not by its magnitude alone, but by its ability to reduce the physical residual while preserving shape constraints. A successful correction is expected to be modest in the \(H\)-norm but significant in the residual norm. This is precisely the regime in which residual-space learning is preferable to full-field neural approximation.

Figure~\ref{fig:pmc} illustrates this behavior. The low-dimensional component captures the main distributional profile, while the neural correction accounts for localized discrepancies induced by nonlocal feedback. The comparison with pure PINN and ROM in the composite figure further shows why the hybrid model is more stable: the pure PINN must learn all scales simultaneously, whereas the static ROM cannot adapt its basis when the nonlinear distributional structure changes. The hybrid model reduces both burdens by assigning different scales to different modules.

\begin{figure}[htbp]
\centering
\includegraphics[width=0.96\textwidth]{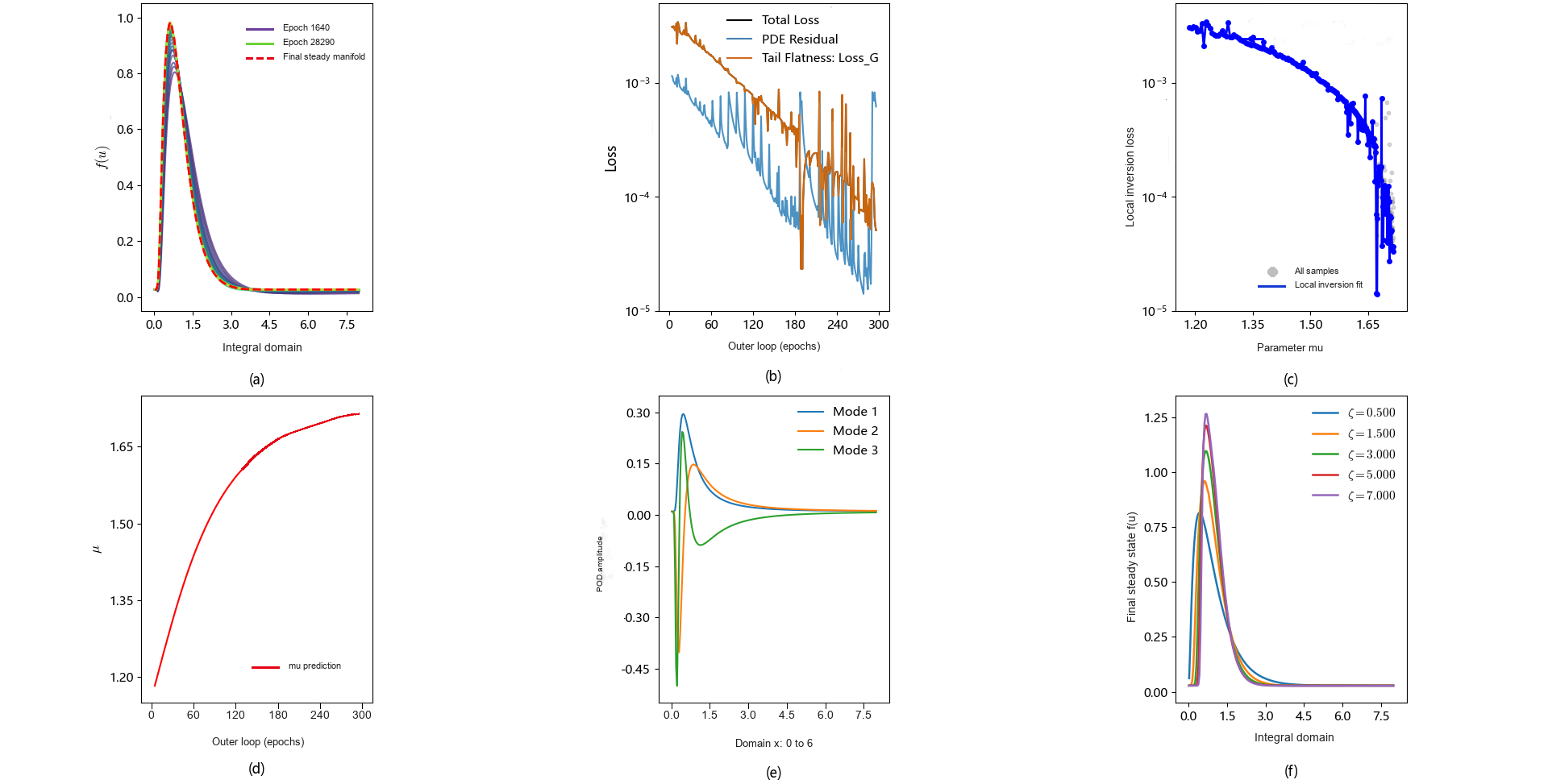}
\caption{Results for the particle population-balance model. The hybrid POD--neural approximation compensates nonlinear nonlocal residual components and suppresses high-frequency oscillations that are not captured by the reduced basis alone.}
\label{fig:pmc}
\end{figure}

\subsection{Bratu equation: residual boundary test near a fold}

The one-dimensional Bratu equation is a classical model for testing bifurcation-neighborhood residual behavior \cite{joseph1973quasilinear,keller1977numerical}:
\[
-u''(x)=\lambda e^{u(x)},\qquad x\in(0,1),
\]
with homogeneous Dirichlet conditions
\[
u(0)=u(1)=0.
\]
As \(\lambda\) approaches a critical value, the solution branch folds and the linearized operator
\[
D_u\mathcal R_{\mathrm{Bratu}}(u;\lambda)
\]
degenerates. Ordinary Newton iteration and ordinary linear residual--error equivalence may fail.

The Bratu experiment has a different theoretical role from OR and PMC. It does not simply test POD or neural compensation. It tests whether the residual indicator remains useful as an a posteriori signal for local model failure and snapshot update when the solution manifold enters a near-singular neighborhood. According to the preceding analysis, the ordinary residual
\[
\|\mathcal R_{\mathrm{Bratu}}(u;\lambda)\|_{\mathcal Y}
\]
cannot linearly control the state error near the fold. Under isolated-degeneracy and finite-branch separation assumptions, however, it may satisfy a fractional one-sided control:
\[
\operatorname{dist}_{\mathcal X}(u,\mathcal Z_\lambda)
\leq
C\|\mathcal R_{\mathrm{Bratu}}(u;\lambda)\|_{\mathcal Y}^{\alpha},
\qquad 0<\alpha\leq 1.
\]
Thus the experiment does not assume the linear residual--error equivalence at the critical point. It observes how residual growth, state-distance variation, and high-fidelity resampling are coupled. Near the fold, the residual threshold must be more conservative, and state-distance forgetting should avoid compressing snapshots from different local branches into the same POD subspace.

The results indicate that the algorithm advances stably along the regular part of the Bratu branch and triggers more frequent high-fidelity resampling and local basis updates near the fold. This should be interpreted as numerical evidence that the residual detects local model failure, not as evidence that the ordinary residual remains linearly equivalent to error at the singular point.

The Bratu experiment is intentionally not presented as a full branch-continuation method. The relevant question is narrower and aligned with the algorithm: when a reduced candidate is produced near a fold, can the residual reveal that the local basis is losing validity? The observed behavior supports an affirmative answer. In the regular part of the branch, the residual behaves approximately as a linear error indicator. Near the fold, the residual becomes more sensitive, update frequency increases, and the condition-number information warns that the regular-branch constants are deteriorating. The algorithm responds by reanchoring the local model rather than by assuming that the old residual--error constant remains valid.

This interpretation is important for consistency with the theoretical analysis. A fold destroys the local graph representation with respect to the original parameter, so a global continuation algorithm would require additional branch parametrization. The present work does not claim to solve that global problem. It claims that residual-based local validity checks remain useful up to their appropriate boundary, and that isolated degeneracy can be handled by stricter thresholds and high-fidelity updates when the goal is local active-branch tracking.

\subsection{Ablation and residual-control experiments}

To analyze the role of each module, the hybrid architecture is compared with pure PINN, static global ROM, and local POD without residual compensation. A pure PINN must learn the low-frequency backbone, high-frequency local structure, and nonlinear physical constraints simultaneously; it is susceptible to spectral bias and gradient competition. A static global ROM assumes that a fixed global subspace covers all parameter states. When the state manifold bends, folds, or changes topology locally, basis mismatch causes residual growth. It may also mix snapshots from different branches and create nonphysical intermediate states. Local POD without compensation mitigates some mismatch by updating the basis, but it can still miss high-frequency residuals in strongly nonlinear nonlocal systems such as PMC.

The proposed hybrid architecture distributes responsibilities: POD captures the low-frequency backbone; residual certification detects local manifold deviation; high-fidelity resampling reanchors the active branch; neural compensation corrects high-frequency residuals; and state-distance forgetting maintains branch consistency of the snapshot pool. The ablation experiments show more stable residual behavior for the hybrid model, supporting the effectiveness of this division of labor.

The significance of the ablation is not that every module is equally important in all problems. OR mainly relies on local POD; PMC relies more strongly on residual compensation; Bratu relies more strongly on residual-threshold tightening, state-distance forgetting, and high-fidelity resampling. This behavior is consistent with the progressive experimental design.

The ablation results also clarify why the proposed method should not be reduced to a single component. Removing residual certification makes the model cheaper but eliminates the mechanism that detects local basis failure. Removing state-distance forgetting preserves more data but increases the risk of mixing incompatible branch information. Removing neural compensation is acceptable in OR but harmful in PMC, where high-frequency residual components are structurally important. Replacing the hybrid model with a pure PINN increases flexibility but loses the stable low-dimensional backbone. These observations support the modular design: each component is activated by a different mathematical difficulty, and the relative importance of the components changes with the complexity of the dynamical system.

\subsection{Empirical verification of residual--error control}

On regular local branches, the theory gives
\[
c_1\|u-u_\alpha^\ast(\mu)\|_{\mathcal X}
\leq
\|\mathcal R(u;\mu)\|_{\mathcal Y}
\leq
c_2\|u-u_\alpha^\ast(\mu)\|_{\mathcal X}.
\]
To empirically assess this mechanism, test points are sampled in regular regions, and the computable residual
\[
\|\mathcal R(u;\mu)\|_{\mathcal Y}
\]
is compared with the state error relative to the high-fidelity benchmark,
\[
\|u-u_h\|_{\mathcal X}.
\]
The log--log scatter plot shows that most test points lie close to a linear envelope, supporting the use of residual thresholds for snapshot update in regular regions.

This empirical verification has clear boundaries. It should not be interpreted as a global error-equivalence proof over the full parameter domain. The theoretical equivalence holds only near regular branches. Near the Bratu bifurcation, the ordinary residual--error relation may degenerate to fractional one-sided control; those points should be marked separately and used to check whether residual exceedance triggers snapshot update and high-fidelity resampling.

In practice, the residual--error scatter plot is used as a calibration tool. On regular validation points, a near-linear log--log relationship supports the chosen residual threshold. If the slope or spread changes substantially, the threshold should be adjusted or the residual norm should be rescaled. Near bifurcation, the slope is expected to differ from one, and this difference is not a failure of the method; it is evidence that the problem has moved from the regular regime to a fractional-control regime. This is why the Bratu data should not be pooled indiscriminately with OR and PMC regular-region data when estimating residual--error constants.

\subsection{Complexity and parameter-step dependence}

The online computational cost as a function of parameter step tests the complexity decomposition. If the one-step residual growth satisfies
\[
\eta_{k+1}\leq \eta_k
+C_\mu\|\Delta\mu_k\|
+C_p\varepsilon_{\mathrm{POD}}
+C_\theta\varepsilon_\theta,
\]
then reducing the parameter step, improving POD truncation, or lowering compensation error lengthens the valid interval between high-fidelity updates. Consequently \(N_{\mathrm{upd}}\) decreases, and the online cost is reduced.

The regression curves show that the actual cost is jointly modulated by parameter step and local manifold complexity. In smooth regular regions, the reduced model can advance for a long distance and high-fidelity calls are rare. In high-curvature, strongly nonlinear, or bifurcating neighborhoods, the residual exceeds the threshold more easily, and high-fidelity update cost increases.

The experimental complexity is therefore summarized by
\[
T_{\mathrm{total}}
=K(T_{\mathrm{ROM}}+T_{\mathrm{NN}})
+N_{\mathrm{upd}}T_{\mathrm{HFM}}.
\]
This formula does not claim an exact scaling law. It explains that total cost is determined by reduced inference, neural compensation, and high-fidelity resampling, with expensive high-fidelity computation concentrated in local regions of rapid solution-manifold change.

The complexity regression in Figure~\ref{fig:composite} should be interpreted in this sense. The slope reflects the average cost of repeated reduced inference and local neural updates, while deviations from the fitted trend correspond to clusters of high-fidelity resampling. Smooth regions of the parameter path lie close to the reduced-cost baseline. Regions with stronger curvature, nonlinear residual growth, or bifurcation sensitivity produce upward deviations because \(N_{\mathrm{upd}}\) increases. This behavior is preferable to a uniformly expensive high-fidelity scan: the extra cost is paid precisely where the solution manifold requires new information.

The accuracy comparison in the same figure complements the complexity plot. A method that is cheap but inaccurate is not useful for certification, and a method that is accurate only by calling the high-fidelity solver at every step loses the advantage of reduction. The hybrid method is positioned between these extremes. It uses the reduced model and neural corrector when they are certified, and it calls the high-fidelity solver only when the residual indicates that the local surrogate has become unreliable.

\begin{figure}[htbp]
\centering
\begin{subfigure}[t]{0.48\textwidth}
\centering
\includegraphics[width=\textwidth]{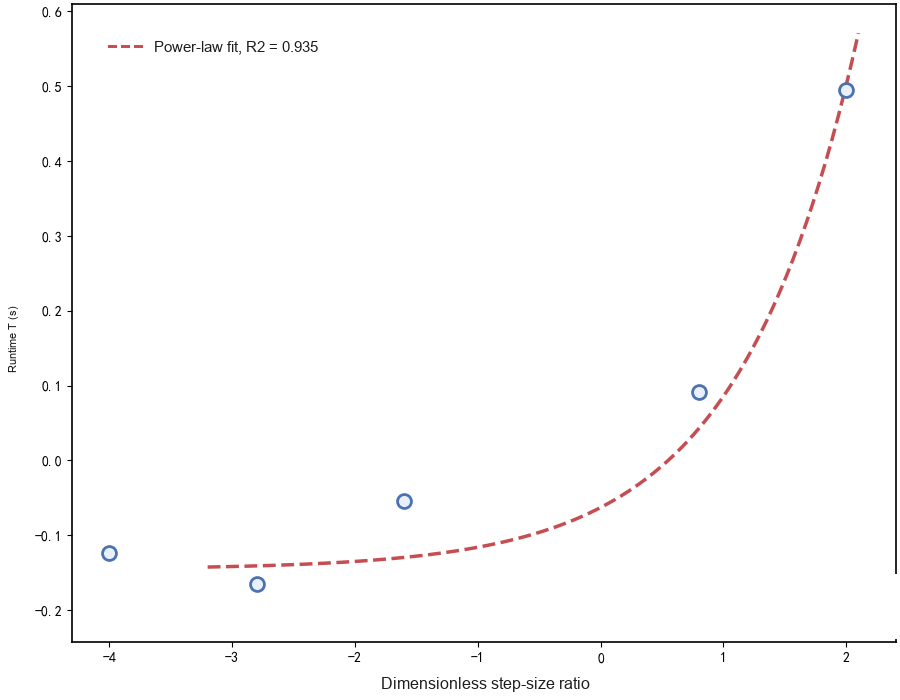}
\caption{Complexity regression.}
\end{subfigure}
\hfill
\begin{subfigure}[t]{0.48\textwidth}
\centering
\includegraphics[width=\textwidth]{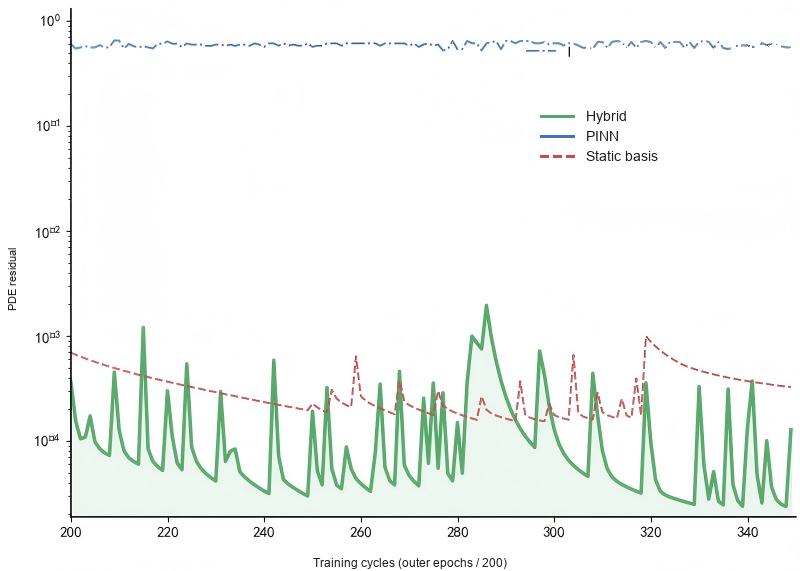}
\caption{Accuracy comparison with PINN and ROM.}
\end{subfigure}

\vspace{0.8em}
\begin{subfigure}[t]{0.48\textwidth}
\centering
\includegraphics[width=\textwidth]{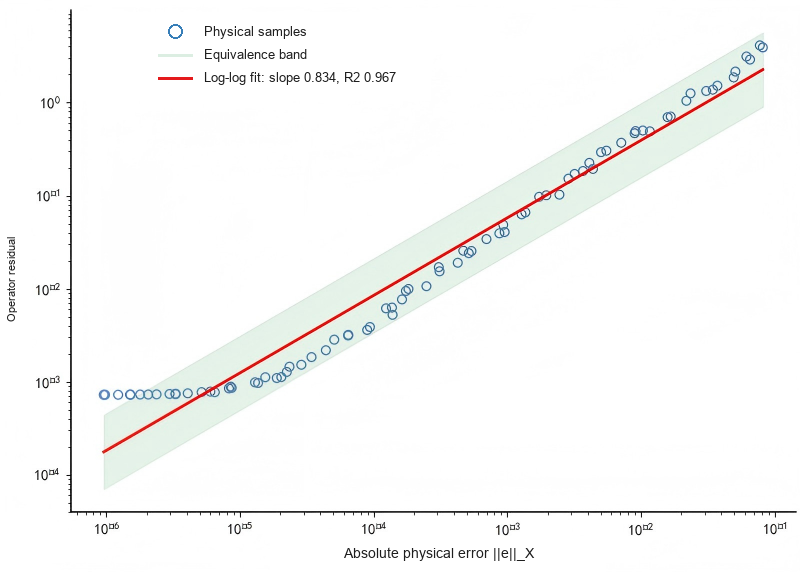}
\caption{Residual-control experiment.}
\end{subfigure}
\hfill
\begin{subfigure}[t]{0.48\textwidth}
\centering
\includegraphics[width=\textwidth]{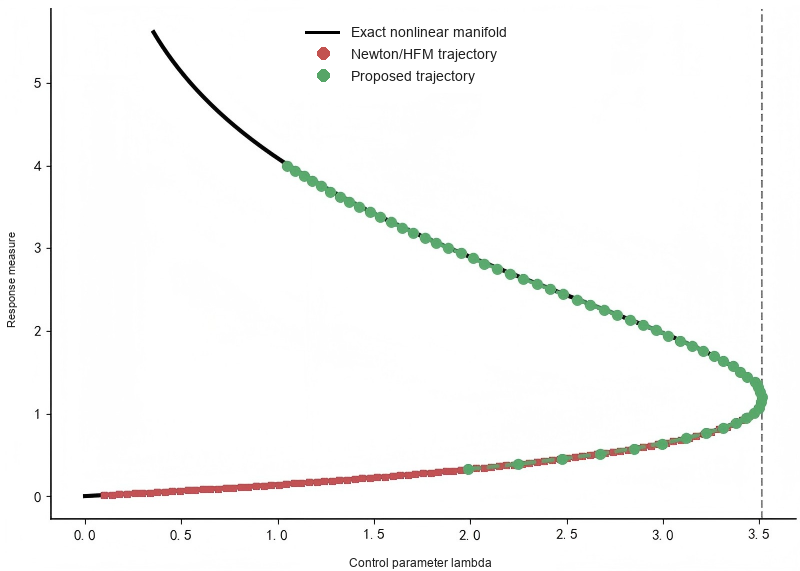}
\caption{Bratu solution behavior.}
\end{subfigure}
\caption{Composite numerical assessment. The four panels summarize the regression of time complexity, comparison against PINN and ROM baselines, residual-control behavior, and the Bratu equation solution experiment.}
\label{fig:composite}
\end{figure}

\subsection{Summary of numerical results}

The OR, PMC, and Bratu examples support the behavior expected from the proposed algorithm across different dynamical-system complexities. OR shows that local POD effectively compresses the solution-manifold backbone in low-dimensional or weakly nonlinear dynamics and that residual certification prevents the inverse process from leaving the physical branch. PMC shows that low-dimensional projection alone is insufficient for high-fidelity accuracy in strongly nonlinear nonlocal systems; spectral decoupling and neural residual compensation are important for suppressing nonphysical oscillations and high-frequency error accumulation. Bratu shows that ordinary linear residual--error equivalence fails near a fold because of Jacobian degeneracy, but the residual remains useful as an a posteriori indicator for local model failure and snapshot update.

The experiments further show that the algorithm's advantage comes from a layered design for parametric solution manifolds: local POD captures the low-frequency backbone, physical residuals certify credibility, state-distance forgetting maintains the active branch, neural networks compensate nonlinear residuals, and high-fidelity resampling reanchors the branch when the local model fails. The numerical results are consistent with the algorithmic design and mathematical analysis.

\section{Conclusion}

This paper developed a residual-certified adaptive hybrid method for tracking solution manifolds of parametric dynamical systems. The method represents evolutionary, equilibrium, and constrained models through a unified residual system, uses local POD to capture the low-dimensional backbone of the currently active branch, employs physical residuals as a posteriori certification indicators, updates snapshots through state-distance forgetting, and compensates nonlinear residual components with a lightweight physics-informed neural network. The central design principle is that no single surrogate is required to represent the full parametric solution set. Instead, the algorithm maintains a local, certified representation of the branch actually visited by the parameter path.

The theoretical analysis established the mathematical role of the residual in this framework. On regular solution branches, the implicit-function theorem and the local inverse-mapping theorem yield a two-sided equivalence between residual norm and state error. This equivalence justifies the use of residual thresholds for accepting reduced candidates, triggering high-fidelity resampling, and updating local bases. For multibranch systems, the equivalence must be interpreted relative to an active branch rather than a globally unique solution. This is why residual certification is coupled with state-distance forgetting: residual smallness certifies proximity to a solution set, while state-space locality helps preserve proximity to the intended physical branch.

The analysis also clarified the role of bifurcation neighborhoods. Near isolated folds or finite-branch pitchfork-type degeneracies, the ordinary linear residual--error equivalence may fail because \(D_u\mathcal R\) loses invertibility. Nevertheless, adaptive model management does not require a full global continuation method. It requires a one-sided estimate that controls the distance to the local solution set by a fractional power of the residual. Under local compactness, finite-branch structure, and branch-separation assumptions, such H\"older-type control is sufficient for deciding whether the current local surrogate remains reliable. This observation is important for the consistency of the present paper: because the experiments do not use pseudo-arclength continuation, the algorithmic core should not claim to rely on it. The necessary mechanism is residual-based detection of local model failure followed by high-fidelity reanchoring and snapshot purification, not complete continuation of all branches through a singular point.

The numerical experiments support this interpretation. In the Ostwald ripening example, the state manifold is largely low-dimensional and smooth, so local POD carries most of the approximation burden while the residual acts as a supervisory mechanism that prevents unsafe extrapolation. In the particle population-balance example, nonlocal nonlinear feedback produces residual components that are poorly represented by a linear basis; the neural compensator improves accuracy by learning only the residual part left after POD reconstruction. In the Bratu example, the residual becomes more sensitive near the fold, and high-fidelity updates occur more frequently. This behavior agrees with the theoretical prediction that the residual remains useful as a local failure indicator even though its relation to state error is no longer a regular linear equivalence.

The proposed architecture also gives a transparent decomposition of computational cost and error. The online cost consists of reduced solves, local neural updates, and high-fidelity resampling. Its efficiency comes from keeping the number of high-fidelity calls small in smooth regions while allowing those calls to concentrate near rapid manifold changes or near-degenerate points. The error decomposition separates high-fidelity discretization error, POD truncation error, local extrapolation error, neural compensation error, and residual-threshold error. This separation is useful for diagnosing which module limits accuracy in a given regime. For example, increasing the POD rank is effective when spectral truncation dominates, but it cannot fix geometric extrapolation outside the snapshot neighborhood; increasing neural-network capacity helps in PMC-type residual compensation, but it is not the right response when the POD candidate has already failed residual certification.

Several limitations remain. The theory is local and does not provide unconditional convergence over arbitrary parameter domains. If singular points form a continuum, if branches remain indistinguishable in the state norm over an extended interval, or if the high-fidelity solver itself switches branches unpredictably, the proposed branch-consistency mechanism may fail. The constants in the residual--error estimates may also become large for highly nonnormal or severely ill-conditioned operators, making residual thresholds conservative. In addition, the present implementation uses relatively simple snapshot forgetting and lightweight neural correction; more sophisticated active sampling, uncertainty quantification, or structure-preserving neural architectures may improve robustness.

Future work can proceed in several directions. First, the residual-certified update rule can be combined with probabilistic error indicators to distinguish deterministic residual growth from noise or discretization artifacts. Second, the snapshot management strategy can be extended to multiple simultaneously active branches by clustering in state space and maintaining several local POD bases in parallel. Third, for high-dimensional multiphysics systems, hyper-reduction and operator compression will be needed to reduce the cost of full residual evaluation. Fourth, the fractional residual-control analysis near bifurcation can be developed into computable local threshold rules, where the exponent \(\alpha\) is estimated from residual--error data. These extensions would preserve the main principle advanced in this paper: adaptive computation should be organized around certified local validity of the active solution manifold, with high-fidelity effort deployed only when the residual shows that the current local model has reached its boundary of reliability.

\end{document}